\newcommand{\N}{\mathbb{N}}
\newcommand{\R}{\mathbb{R}}
\newcommand{\rar}{\mbox{$\rightarrow$}}
\newcommand{\Si}{\Sigma}
\newcommand{\T}{\mathbb{T}}
\newcommand{\tr}{\Delta}
\newcommand{\Z}{\mathbb{Z}}
\newtheorem{theorem}{Theorem}[section]
\newtheorem{corollary}[theorem]{Corollary}
\newtheorem{proposition}[theorem]{Proposition}
\theoremstyle{definition}
\newtheorem{ex}[theorem]{Example}
\newtheorem{definition}[theorem]{Definition}
\theoremstyle{remark}
\newtheorem{rem}[theorem]{Remark}
\begin{document}

\title{Linear positive control systems on time scales. Controllability.\thanks{Supported by the Bialystok University of Technology under grant No. S/WI/2/2011}}

\author{Zbigniew Bartosiewicz\thanks{Bia{\l}ystok University of Technology,  Faculty of Computer Science, Wiejska 45A, 15-351~Bialystok, Poland (\tt z.bartosiewicz@pb.edu.pl)}}

\maketitle

\begin{abstract}                          
Positive linear systems on arbitrary time scales are studied. The theory developed in the paper unifies and extends concepts and results known for continuous-time and discrete-time systems. A necessary and sufficient condition for a linear system on a time scale to be positive is presented. Properties of positive reachable sets are investigated and characterizations of various controllability properties are presented. A modified Gram matrix of the system is used to state necessary and sufficient condition of positive reachability of a positive system on an arbitrary time scale.
\end{abstract}

\section{Introduction}
In many applications the variables that appear in a mathematical description take only positive or
nonnegative values. Examples of such systems can be found in \cite{FR} and \cite{Ka}, where also a theory of
linear positive systems was developed. Usually the systems that are studied fall into two separate classes:
continuous-time systems and discrete-time systems. In \cite{Ka} all the problems are studied twice in these two
separate settings. The characterizations of properties of positive systems for these two classes are sometimes
similar, or even identical, and sometimes essentially distinct.

Stefan Hilger in his Ph.D. thesis \cite{H} started the most successful attempt to unify the theories of
continuous-time systems and discrete-time systems into one theory. It is based on the concept of time scale and
the calculus on time scales. A time scale is a model of time. Time may be discrete or continuous, or partly
continuous and partly discrete. The concepts of standard derivative used in the case of continuous time and
forward difference used in the discrete time are unified into one concept of delta derivative. This allows to
consider delta differential equations on arbitrary time scales. They generalize standard differential equations
and difference equations. Theory of dynamical systems on time scales was developed in \cite{Bh}. Special
attention was paid to linear delta differential equations.

The interest in control systems on time scales dates back to 2004. The first results have concerned
controllability, observability and realizations of linear constant-coefficient and varying-coefficient control
systems with outputs \cite{BP2}, \cite{BP3}. Since then the literature on control systems on time scales has been rapidly growing, including also nonlinear systems.

Controllability of continuous-time and discrete-time linear positive systems has been a subject of research since late 1980's \cite{CS}, \cite{FMT}, \cite{OMK}, \cite{RJ}. Discrete-time systems appeared to be easier to deal with and it seems that positive cotrollability of such systems is now fully understood (see e.g. \cite{BRS}, \cite{C}, \cite{Ka1}). On the other hand, only recently it was discovered that positive reachability of continuous-time systems requires very restrictive conditions to be satisfied \cite{CA}, \cite{V}. Thus criteria of positive reachability for discrete-time systems and continuous-time systems are essentially different.

In this paper we study linear positive constant-coefficients systems on arbitrary time scales. The results
presented here unify and extend corresponding results obtained for linear positive continuous-time and
discrete-time systems. We prove necessary and sufficient conditions for a linear system $x^\Delta=Ax+Bu$ on a
time scale $\T$ to be positive. They involve the matrices $A$ and $B$ and the graininess function of the time
scale, which describes distribution of the instances of time. We also study two controllability properties of
positive linear systems: positive accessibility and positive reachability. Accessibility appears to be a property whose characterization does not depend on time scale. It is equivalent to standard controllability and expressed with the aid of the Kalman controllability matrix. As the criteria for positive reachability are completely different for continuous-time and discrete-time systems we have tried to develop methods which would result in the same statements for different time scales. We introduce a modified Gram matrix for a system on time scales, for which we select columns of $B$ and choose different sets of integration for different columns. We prove that the system is positively reachable on an interval if and only if such a Gram matrix is monomial, i.e. each its column and each its row contain exactly one positive element. Then we show that from this characterization we can deduce known criteria for positive reachability of continuous-time and discrete-time systems. We also show that on nonhomogeneous time scales many properties known to hold for homogeneous time scales are no longer true.

In Section~\ref{sec2} we recall basic material on positive systems, time scales and linear systems on time scales. Section~\ref{sec3} is devoted to positive control systems and Section~\ref{sec4} to positive accessibility and positive reachability.

\section{Preliminaries}\label{sec2}
We introduce here the main concepts, recall definitions and facts, and set notation. For more information on
positive continuous-time and discrete-time systems, the reader is referred to e.g. \cite{FR}, and for
information on time scales calculus, to e.g. \cite{Bh}.

\subsection{Positive math}
By $\mathbb{R}$ we shall denote the set of all real numbers, by $\mathbb{Z}$ the set of integers, and by
$\mathbb{N}$ the set of natural numbers (without $0$). We shall also need the set of nonnegative real numbers,
denoted by $\R_+$ and the set of nonnegative integers $\Z_+$, i.e. $\N\cup\{0\}$. Similarly, $\R^k_+$ will mean
the set of all column vectors in $\R^k$ with nonnegative components and $\R^{k\times p}_+$ will consist of
$k\times p$ real matrices with nonnegative elements. If $A\in \R^{k\times p}_+$ we write $A\geq 0$ and say that
$A$ is \emph{nonnegative}. A nonnegative matrix $A$ will be called \emph{positive} if at least one of its
elements is greater than $0$. Then we shall write $A>0$.

A positive column or row vector is called \emph{monomial} if one of its components is positive and all the other are zero. A monomial column in $\mathbb{R}^n_+$ has the form $\alpha e_k$ for some $\alpha>0$ and $1\leq k\leq n$, where $e_k$ denotes the column with 1 at the $k$th position and other elements equal 0. Then we say that the column is \emph{$k$-monomial}. An $n\times n$ matrix $A$ is called \emph{monomial} if  all columns and rows of $A$ are monomial. Then $A$ is invertible and its inverse is also positive. Moreover, we have the following important fact.

\begin{proposition}
A positive matrix $A$ has a positive inverse if and only if $A$ is monomial.
\end{proposition}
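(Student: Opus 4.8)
The plan is to treat the two implications separately, since the paragraph preceding the proposition has already recorded the easier one. For the backward direction, if $A$ is monomial then that paragraph asserts $A$ is invertible with a positive inverse; concretely I would note that a monomial matrix factors as $A = PD$ with $P$ a permutation matrix and $D$ diagonal with positive diagonal, so $A^{-1} = D^{-1}P^{-1} \geq 0$. I would dispatch this in a line and spend the real effort on the forward direction.

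For the forward implication, suppose $A \geq 0$ is invertible and set $B := A^{-1}$, which by hypothesis is nonnegative. The whole argument rests on reading the identity $AB = I$ entrywise. The diagonal entries give $\sum_k a_{ik} b_{ki} = 1$, while for $i \neq j$ the off-diagonal entries give $\sum_k a_{ik} b_{kj} = 0$. Since every summand $a_{ik} b_{kj}$ is nonnegative, the off-diagonal equations force the termwise vanishing $a_{ik} b_{kj} = 0$ whenever $i \neq j$. This termwise condition is the engine of the proof.

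First I would observe that invertibility of $A$ rules out a zero row, so each row of $A$ carries at least one positive entry. To obtain ``at most one,'' I argue by contradiction: if row $i$ had two positive entries $a_{ip}, a_{iq} > 0$ with $p \neq q$, then the termwise condition applied with $k = p$ and with $k = q$ yields $b_{pj} = b_{qj} = 0$ for every $j \neq i$. Thus rows $p$ and $q$ of $B$ are both supported only in column $i$, so both lie in the one-dimensional space spanned by $e_i^{\top}$; but two distinct rows of the invertible matrix $B$ cannot be linearly dependent, a contradiction. Hence every row of $A$ contains exactly one positive entry.

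It remains to transfer this to the columns. Each row of $A$ has its unique positive entry in some column $\sigma(i)$, defining a map $\sigma$ on $\{1,\dots,n\}$. If $\sigma$ were not surjective, some column of $A$ would be entirely zero, contradicting invertibility; hence $\sigma$ is a bijection, which forces each column of $A$ to contain exactly one positive entry as well. Therefore $A$ is monomial. The step I expect to be the crux is precisely the rank contradiction for $B$: everything hinges on combining the purely combinatorial termwise-zero condition with the fact that $B$, being invertible, has linearly independent rows.
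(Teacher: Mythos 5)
Your proof is correct. Note, however, that the paper offers no proof of this proposition at all: it is stated as a known fact from the theory of positive matrices (see, e.g., \cite{FR} or \cite{Ka}), immediately after the remark that a monomial matrix is invertible with positive inverse. So there is nothing in the paper to compare your argument against; what you have supplied is a complete, self-contained, elementary proof. Your backward direction (the factorization $A=PD$) matches the fact the paper takes for granted. Your forward direction is the standard argument and is airtight: reading $AB=I$ entrywise, nonnegativity forces the termwise vanishing $a_{ik}b_{kj}=0$ for $i\neq j$, and the rank contradiction (two rows of the invertible matrix $B=A^{-1}$ trapped in the span of a single $e_i^{\top}$) correctly rules out a row of $A$ having two positive entries; the finite-set pigeonhole argument via the map $\sigma$ then transfers the conclusion from rows to columns. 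The only stylistic remark is that the diagonal equations $\sum_k a_{ik}b_{ki}=1$ you record are never actually needed, since invertibility already excludes zero rows and zero columns.
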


It will be convenient to extend the set of all real numbers adding one element. It will be denoted by $\infty$ and
will mean the positive infinity. We set $\bar{\R}:=\R\cup \{\infty\}$ and $\bar{\R}_+:=\R_+\cup \{\infty\}$. If
$a\in \R$ then we define $a+\infty=\infty$. Moreover, for $a\in\R$ and $a>0$ we set $a/0=\infty$ and
$a/\infty=0$. Of course $\infty>0$. If a matrix $A$ has elements from $\bar{\R}$, then the notions of
nonnegativity and positivity have the same meanings as before and are denoted in the same way. Addition of such
matrices is defined in the standard way, but we shall not need multiply or invert such matrices.

\subsection{Calculus on time scales}

Calculus on time scales is a generalization of the standard differential calculus and the calculus of finite
differences.

 A {\it time scale} $\T$ is an arbitrary nonempty closed subset of the set $\R$
of real numbers. In particular $\T=\R$, $\T=h\Z$ for $h>0$ and $\T=q^{\N}:=\{ q^k, k\in\N\}$ for $q>1$ are time scales. We assume that $\T$ is a
topological space with the relative topology induced from $\R$. If $t_0,t_1\in\T$, then $[t_0,t_1]_\T$ denotes the
intersection of the ordinary closed interval with $\T$. Similar notation is used for open, half-open or infinite
intervals.

For $t \in \T$ we define
 the {\it forward jump operator} $\sigma:\T \rar \T$ by
$\sigma(t):=\inf\{s \in \T:s>t\}$ if $t\neq\sup\T$ and $\sigma(\sup\T)=\sup\T$ when $\sup\T$ is finite;
the {\it backward jump operator} $\rho:\T \rar \T$ by
$\rho(t):=\sup\{s \in \T:s<t\}$ if $t\neq\inf\T$ and $\rho(\inf\T)=\inf\T$ when $\inf\T$ is finite;
  the {\it forward graininess function} $\mu:\T \rar [0,\infty)$ by
$\mu(t):=\sigma(t)-t$;
 the {\it backward graininess function} $\nu:\T \rar [0,\infty)$ by
$\nu(t):=t-\rho(t)$.

If $\sigma(t)>t$, then $t$ is called {\it right-scattered}, while if $\rho(t)<t$, it is called {\it
left-scattered}. If $t<\sup\T$ and $\sigma(t)=t$ then $t$ is called {\it right-dense}. If $t>\inf\T$ and
$\rho(t)=t$, then $t$ is {\it left-dense}.

The time scale $\T$ is \emph{homogeneous}, if $\mu$ and $\nu$ are constant functions. When $\mu\equiv 0$ and $\nu\equiv 0$, then $\T=\R$ or $\T$ is a closed interval (in particular a half-line). When $\mu$ is constant and greater than $0$, then $\T=\mu\Z$.

Let $\T^k:=\{t \in \T: t\;\text{ is nonmaximal or left-dense}\}$. Thus $\T^k$ is got from $\T$
by removing its maximal point if this point exists and is left-scattered.

Let $f:\T \rar \R$ and $t \in \T^k$.  The \emph{delta derivative of $f$ at $t$}, denoted by $f^{\tr}(t)$, is the
real number with the property that given any $\varepsilon$ there is a neighborhood
$U=(t-\delta,t+\delta) \cap \T$  such that
\[|(f(\sigma(t))-f(s))-f^{\tr}(t)(\sigma(t)-s)| \leq \varepsilon|\sigma(t)-s|\]
for all $s \in U$. If $f^{\tr}(t)$ exists, then we say that \emph{$f$ is delta differentiable at $t$}. Moreover, we say that $f$ is {\it delta differentiable} on $\T^k$ provided $f^{\tr}(t)$
exists for all $t\in \T^k$.

\begin{ex}
If $\T=\R$, then  $f^{\tr}(t)=f'(t)$.
 If $\T=h\Z$, then
$f^{\tr}(t)=\frac{f(t+h)-f(t)}{h}$.
If $\T=q^{\N}$, then $f^{\tr}(t)=\frac{f(qt)-f(t)}{(q-1)t}$.
\end{ex}

 A function
$f:\T \rar \R$ is called {\it rd-continuous} provided it is continuous at right-dense points in $\T$ and its
left-sided limits exist (finite) at left-dense points in $\T$.
If $f$ is continuous, then it is rd-continuous.

A function $F:\T \rar \R$ is called an {\it antiderivative} of $f: \T \rar \R$
provided $F^{\tr}(t)=f(t)$ holds for all $t \in \T^k$. Let $a,b\in\T$. Then the \emph{delta integral} of $f$ on the interval $[a,b)_\T$ is defined by
\[ \int_a^b f(\tau) \tr \tau :=\int_{[a,b)_\T} f(\tau) \tr \tau := F(b)-F(a). \]

It is more convenient to consider the half-open interval $[a,b)_\T$ than the closed interval $[a,b]_\T$ in the definition of the integral. If $b$ is a left-dense point, then the value of $f$ at $b$ would not affect the integral. On the other hand, if $b$ is left-scattered, the value of $f$ at $b$ is not essential for the integral (see Example~\ref{e10}). This is caused by the fact that we use delta integral, corresponding to the forward jump function.

We have a natural property:
\[ \int_a^b f(\tau)\Delta\tau=\int_a^c f(\tau)\Delta\tau + \int_c^b f(\tau)\Delta\tau \]
for any $c\in (a,b)_T$.
Moreover, if $f$ is rd-continuous, $f(t) \geq 0$ for all $a
\leq t < b$ and $\int\limits_a^b f(\tau) \tr \tau =0 $, then $f\equiv 0$.

\begin{ex} \label{e10}
If $\T=\R$, then $\int\limits_a^b f(\tau) \tr \tau=\int\limits_a^b f(\tau)d\tau$, where the
integral on the right is the usual Riemann integral. If $\T=h\Z$, $h>0$, then $\int\limits_a^b
f(\tau)\tr\tau=\sum\limits_{t=\frac{a}{h}}^{\frac{b}{h}-1}f(th)h$ for $a<b$.
\end{ex}

\subsection{Linear systems on time scale}

Let us consider the system of delta differential equations on a time scale $\T$:
\begin{equation} \label{sys}
 x^{\tr}(t)=Ax(t),
 \end{equation}
where $x(t)\in\R^n$ and $A$ is a constant $n\times n$ matrix.

\begin{rem} If $\T=\R$, then \eqref{sys} is a system of ordinary differential equations $x'=Ax$. But for $\T=\Z$, \eqref{sys} takes the difference form  $x(t+1)-x(t)=Ax(t)$, which can be transformed to the shift form $x(t+1)=(I+A)x(t)$. Thus to compare the definitions and the results stated for delta differential systems in the case $\T=\Z$ with those that were obtained for discrete-time systems in the shift form, one has to take this into account. One can easily transform the difference form to the shift form and vice versa.
\end{rem}

\begin{proposition}  \label{pr1}
Equation \eqref{sys} with initial condition $x(t_0)=x_0$ has a unique forward solution defined for all $t \in
[t_0,+\infty)_\T$.
\end{proposition}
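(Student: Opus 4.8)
The plan is to prove existence and uniqueness of the forward solution by splitting the time scale into its discrete and continuous parts and stitching local solutions together. The key observation is that the structure of the delta derivative differs fundamentally at right-scattered versus right-dense points, so I would treat these two cases separately and then argue that the pieces combine into a single global solution on $[t_0,+\infty)_\T$.

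First I would handle right-scattered points. If $t\in\T$ is right-scattered with graininess $\mu(t)>0$, then the delta derivative reduces to the difference quotient $x^\tr(t)=(x(\sigma(t))-x(t))/\mu(t)$, so equation \eqref{sys} becomes the explicit recursion $x(\sigma(t))=(I+\mu(t)A)x(t)$. This determines $x(\sigma(t))$ uniquely from $x(t)$, giving both existence and uniqueness of the forward step across any gap. Thus on any purely discrete stretch of the time scale the solution is obtained by iterating this update, and no analytic difficulty arises. Next I would address the right-dense portions, where $\mu(t)=0$. On a maximal interval $[a,b]_\T$ consisting of right-dense points, the time scale locally coincides with a real interval, the delta derivative coincides with the ordinary derivative (as recorded in the Example for $\T=\R$), and \eqref{sys} reduces to the linear ODE $x'=Ax$. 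Standard Picard--Lindel\"of theory, or simply the explicit matrix exponential $x(t)=e^{A(t-a)}x(a)$, then yields a unique solution on that interval with the prescribed left endpoint value.

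The main step is to glue these local solutions into one solution defined on all of $[t_0,+\infty)_\T$ and to prove global existence, i.e. that the solution does not terminate before exhausting the time scale. I would proceed by a standard maximality/continuation argument: consider the supremum $T^*$ of all $t$ up to which a unique forward solution exists, and show $T^*=+\infty$ (or $T^*=\sup\T$). At a right-scattered point the recursion always extends the solution one step further, so $T^*$ cannot be a right-scattered point. At a right-dense point the local ODE argument extends the solution to the right, so $T^*$ cannot be right-dense either. The only remaining possibility is that $T^*$ is a left-dense point approached from below by solution values; here I would use the linearity of \eqref{sys} to obtain a uniform (exponential-type) a priori bound on $x$ that prevents blow-up and guarantees the left-sided limit exists, allowing the solution to be continued through $T^*$. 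Uniqueness propagates because at every point the next value is uniquely determined: by the recursion at right-scattered points and by uniqueness for the linear ODE at right-dense points.

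The hard part will be the continuation across accumulation points where the time scale switches character infinitely often---for instance a left-dense point that is a limit of right-scattered points. There the naive step-by-step or interval-by-interval construction need not reach the limit point in finitely many operations, so one must control the cumulative effect of infinitely many multiplicative updates $I+\mu(t)A$ together with the integral contributions from dense stretches. The technical device here is the time-scale exponential matrix function $e_A(t,t_0)$, whose existence and regularity on $[t_0,+\infty)_\T$ are established in the cited monograph \cite{Bh}; invoking it gives $x(t)=e_A(t,t_0)x_0$ as the explicit unique forward solution and subsumes all the gluing and convergence bookkeeping into the known theory of the delta exponential. I would therefore structure the final proof around this exponential, reducing the proposition to the already-developed existence, uniqueness and global-definition properties of $e_A(\cdot,t_0)$ for a regressive (here, polynomially bounded) linear delta system.
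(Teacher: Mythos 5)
The paper itself gives no proof of this proposition: it is recalled as standard preliminary material on dynamic equations (the source being \cite{Bh}), and in fact the matrix exponential $e_A(t,t_0)$ is \emph{defined} in the paper as the unique forward solution whose existence this proposition asserts. Measured against that, your proposal contains one genuine mathematical gap and one problematic citation. The gap: your dichotomy ``right-scattered points, handled by the recursion $x(\sigma(t))=(I+\mu(t)A)x(t)$, versus right-dense points lying in a real subinterval of $\T$, handled by Picard--Lindel\"of'' does not exhaust a general time scale. A right-dense point need not be followed by any interval of $\T$: in $\T=\{0\}\cup\{1/k : k\in\N\}$ the point $0$ is right-dense but is a limit from the right of right-scattered points, and scattered and dense points can even be interleaved in a Cantor-like fashion. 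At such a point your maximality argument breaks down: there is no ``next point'' for the recursion and no interval on which to solve an ODE, so the claim that $T^*$ cannot be right-dense is unjustified. Continuing the solution forward past such a point genuinely requires controlling infinitely many factors $I+\mu A$ accumulating at the dense point (in the example above, an infinite ordered product $\prod_{m>k}(I+\mu_m A)$ converging because $\sum_m \mu_m<\infty$), or, more cleanly, recasting \eqref{sys} as the delta integral equation $x(t)=x_0+\int_{t_0}^t Ax(\tau)\tr\tau$ and running a Picard iteration, which treats all point types uniformly. You name this difficulty explicitly, but supply no mechanism for it beyond deferring to the literature.

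That deferral is the second issue. Invoking the exponential theory of \cite{Bh} is a legitimate way to discharge the proposition (and is, in effect, exactly what the paper does), but then the entire case analysis preceding it is redundant, and within the paper's own logical order it is circular, since here $e_A(t,t_0)$ is defined by this very proposition. Moreover your parenthetical ``regressive (here, polynomially bounded)'' misstates regressivity: it means that $I+\mu(t)A$ is invertible for all $t$, not any growth condition, and it can \emph{fail} for a constant matrix (take $A=-I$ on $\T=\Z$). The exponential theory in \cite{Bh} is developed under the regressivity hypothesis, whereas the proposition claims only \emph{forward} existence and uniqueness, which holds without it (in the example the unique forward solution is $x\equiv 0$ after $t_0$). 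So the reduction as you state it does not cover all the cases the proposition does; you would need either the non-regressive forward existence theory (induction principle on time scales) or the integral-equation argument sketched above.
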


The \emph{matrix exponential function} (at $t_0$) for $A$ is defined as the unique forward solution of the
matrix differential equation $X^{\tr}=AX$, with the initial condition $X(t_0)=I$. Its value at $t$ is denoted by
$e_A(t,t_0)$.

\begin{ex}
 If $\T=\R$, then $e_A(t,t_0)=e^{A(t-t_0)}$.
 If $\T=h\Z$, then $e_A(t,t_0)=(I+A)^{(t-t_0)/h}$.
If $\T=q^{\N}$, $q>1$, then
 $e_A(q^kt_0,{t_0})=\prod_{i=0}^{k-1}(I+(q-1)q^it_0A)$ for $k\geq 1$ and $t_0\in\T$.
\end{ex}

\begin{proposition} \label{p10}
The following properties hold for every $t,s,r\in\T$ such that $r \leq s \leq t$:\\
{\bf i)} $e_A(t,t) = I$; \\
{\bf ii)} $e_A(t,s)e_A(s,r)=e_A(t,r)$;\\
\end{proposition}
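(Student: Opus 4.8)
The plan is to derive both statements directly from the defining initial value problem for the matrix exponential together with the uniqueness of forward solutions (Proposition~\ref{pr1}). Statement~\textbf{i)} is essentially built into the definition: by construction $e_A(\cdot,t)$ is the unique forward solution $X$ of $X^\Delta=AX$ satisfying the initial condition $X(t)=I$, and $e_A(t,t)$ is just the value of that solution at its own initial time $t$; hence $e_A(t,t)=I$.

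For \textbf{ii)} I would fix $r\le s$ and regard everything as a function of the free variable $t\in[s,\infty)_\T$. Define $X_1(t):=e_A(t,s)\,e_A(s,r)$ and $X_2(t):=e_A(t,r)$. The key observation is that $e_A(s,r)$ is a \emph{constant} matrix (it does not depend on $t$), so by linearity of the delta derivative it can be pulled through the operation: $X_1^\Delta(t)=\bigl(e_A(\cdot,s)\bigr)^\Delta(t)\,e_A(s,r)=A\,e_A(t,s)\,e_A(s,r)=A\,X_1(t)$, where I used that $e_A(\cdot,s)$ solves $X^\Delta=AX$. The same equation $X_2^\Delta=A\,X_2$ holds by definition of $e_A(\cdot,r)$. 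Thus $X_1$ and $X_2$ are both forward solutions of the single matrix equation $X^\Delta=AX$ on $[s,\infty)_\T$.

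It then remains to match their initial data at $t=s$. Using part~\textbf{i)}, $X_1(s)=e_A(s,s)\,e_A(s,r)=I\cdot e_A(s,r)=e_A(s,r)=X_2(s)$. Since $X_1$ and $X_2$ satisfy the same linear matrix equation with the same value at $s$, uniqueness of forward solutions forces $X_1\equiv X_2$ on $[s,\infty)_\T$, and evaluating at any $t\ge s$ yields exactly $e_A(t,s)\,e_A(s,r)=e_A(t,r)$. Note that the hypothesis $r\le s\le t$ is precisely what guarantees that all three exponentials $e_A(t,s)$, $e_A(s,r)$ and $e_A(t,r)$ are defined as forward solutions.

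The only point requiring care is that Proposition~\ref{pr1} is phrased for the \emph{vector} equation $x^\Delta=Ax$, whereas the argument above invokes uniqueness for the \emph{matrix} equation $X^\Delta=AX$. I expect this to be the main (though minor) obstacle, and I would dispose of it by a columnwise reduction: each column of a matrix solution of $X^\Delta=AX$ is itself a solution of $x^\Delta=Ax$, so two matrix solutions agreeing at $s$ agree column by column, hence everywhere on $[s,\infty)_\T$. With this remark in place, the proof is a routine application of uniqueness and the linearity of the delta derivative.
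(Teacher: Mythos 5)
The paper states Proposition~\ref{p10} without any proof, treating it as a standard fact from the time-scale calculus literature (see \cite{Bh}), so there is no in-paper argument to compare yours against; your proof is correct and is the standard one. Part~i) is indeed immediate from the definition of $e_A(\cdot,t)$ as the forward solution of $X^\Delta=AX$ with value $I$ at $t$, and for part~ii) your argument is sound: $t\mapsto e_A(t,s)e_A(s,r)$ and $t\mapsto e_A(t,r)$ both solve $X^\Delta=AX$ on $[s,\infty)_\T$ (right-multiplication by the constant matrix $e_A(s,r)$ commutes with delta differentiation), they agree at $t=s$ by part~i), and uniqueness of forward solutions forces them to coincide. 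Your columnwise reduction to the vector equation of Proposition~\ref{pr1} is a careful touch, though strictly speaking the paper's definition of the matrix exponential already asserts uniqueness of the forward solution of the matrix equation $X^\Delta=AX$, so you could have invoked that directly.
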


Let us consider now a nonhomogeneous system
\begin{equation} \label{sys1}
x^{\tr}(t)=Ax(t)+f(t)
\end{equation}
where $f$ is rd-continuous.

\begin{theorem} \label{nonh}
Let $t_0\in\T$. System~\eqref{sys1} for the initial condition $x(t_0)=x_0$ has a unique forward solution of the
form
 \begin{equation} \label{vc}
  x(t)=e_A(t,t_0)x_0 + \int_{t_0}^t
  e_A(t,\sigma(\tau))f(\tau)\tr\tau.
 \end{equation}
\end{theorem}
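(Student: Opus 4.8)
The plan is to verify directly that the function $x(t)$ given by the formula \eqref{vc} satisfies both the initial condition and the delta differential equation \eqref{sys1}, and then to invoke uniqueness, which is already guaranteed by Proposition~\ref{pr1} applied to the equivalent homogeneous-with-forcing setting. First I would check the initial condition: setting $t=t_0$ in \eqref{vc}, the integral $\int_{t_0}^{t_0} e_A(t_0,\sigma(\tau))f(\tau)\tr\tau$ vanishes, and by Proposition~\ref{p10}(i) we have $e_A(t_0,t_0)=I$, so $x(t_0)=Ix_0=x_0$ as required.

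The heart of the argument is to differentiate \eqref{vc} and confirm that $x^{\tr}(t)=Ax(t)+f(t)$. Writing $x(t)=e_A(t,t_0)x_0 + y(t)$ where $y(t):=\int_{t_0}^t e_A(t,\sigma(\tau))f(\tau)\tr\tau$, the first term contributes $A\,e_A(t,t_0)x_0$ to the delta derivative, since $e_A(\cdot,t_0)$ solves $X^{\tr}=AX$ by definition of the matrix exponential. For the second term I would apply the delta analogue of the Leibniz rule for differentiating an integral whose integrand also depends on the upper limit. The integrand $g(t,\tau):=e_A(t,\sigma(\tau))f(\tau)$ has the key feature that, as a function of its first argument $t$, it again solves the equation $X^{\tr}=AX$ by Proposition~\ref{p10}(ii) and the definition of $e_A$, so the ``interior'' contribution to $y^{\tr}(t)$ is $\int_{t_0}^t A\,e_A(t,\sigma(\tau))f(\tau)\tr\tau = A\,y(t)$. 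The ``boundary'' contribution, coming from the moving upper limit, is evaluated at $\tau=t$ but with the integrand taken at $\sigma(t)$ in the first slot, producing $e_A(\sigma(t),\sigma(t))f(t)=f(t)$, again using Proposition~\ref{p10}(i). Combining the two terms yields $x^{\tr}(t)=A\,e_A(t,t_0)x_0 + A\,y(t) + f(t)=Ax(t)+f(t)$.

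The main obstacle is the careful justification of the delta-Leibniz differentiation of $y(t)$, because on a general time scale the upper limit behaves differently at right-scattered and right-dense points, and one must confirm that the boundary term correctly carries the forward jump $\sigma$ in the integrand. The cleanest route is to establish the time-scales Leibniz rule
\begin{equation*}
\left(\int_{t_0}^t g(t,\tau)\tr\tau\right)^{\tr} = \int_{t_0}^t g^{\tr}(t,\tau)\tr\tau + g(\sigma(t),t),
\end{equation*}
where $g^{\tr}$ denotes the delta derivative in the first variable; this is the precise point where the shift to $\sigma(\tau)$ inside the exponential in \eqref{vc} pays off, since it makes the integrand $g(\sigma(t),t)=e_A(\sigma(t),\sigma(t))f(t)$ collapse to exactly $f(t)$. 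I would verify this rule (or cite it from the time-scales calculus in \cite{Bh}) and then the computation above goes through. Finally, since Proposition~\ref{pr1} guarantees that the initial value problem has a unique forward solution on $[t_0,+\infty)_\T$, the function \eqref{vc} is that solution, completing the proof.
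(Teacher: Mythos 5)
The paper itself states Theorem~\ref{nonh} without proof --- it is recalled as a standard preliminary from the time-scales literature (cf.\ \cite{Bh}) --- so there is no in-paper argument to compare against. Your verification is correct and is, in essence, the standard proof of the variation-of-constants formula on time scales: the initial-condition check via Proposition~\ref{p10}(i), the differentiation of the first term using the defining property $X^{\tr}=AX$ of $e_A(\cdot,t_0)$, and the delta-Leibniz rule
\[
\Bigl(\int_{t_0}^t g(t,\tau)\tr\tau\Bigr)^{\tr}=\int_{t_0}^t g^{\tr}(t,\tau)\tr\tau+g(\sigma(t),t),
\]
whose boundary term is precisely where the shift $\sigma(\tau)$ inside the exponential pays off, collapsing $e_A(\sigma(t),\sigma(t))f(t)$ to $f(t)$; this rule is available in \cite{Bh} (Theorem~1.117), and citing it is legitimate. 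One point deserves tightening: Proposition~\ref{pr1} as stated concerns only the homogeneous equation \eqref{sys}, so it does not directly give uniqueness for \eqref{sys1}; the clean fix is the usual subtraction argument --- if $x_1$ and $x_2$ both solve \eqref{sys1} with $x_1(t_0)=x_2(t_0)=x_0$, then $z:=x_1-x_2$ satisfies $z^{\tr}=Az$ with $z(t_0)=0$, hence $z\equiv 0$ by the uniqueness in Proposition~\ref{pr1}. With that one sentence added, your proof is complete.
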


\section{Positive control systems} \label{sec3}

Let $n\in\N$ be fixed. From now on we shall assume that the time scale $\T$ consists of at least $n+1$
elements.

Let us consider a linear control system, denoted by $\Si$, and defined on the time scale $\T$:
\begin{equation}\label{ukl1}
   x^{\tr}(t)=Ax(t)+Bu(t)
           \end{equation}
where $t\in\T$, $x(t)\in\R^n$ and $u(t)\in\R^m$.

We assume that the control $u$ is a piecewise continuous function defined on some interval $[t_0,t_1)_\T$, depending on $u$, where $t_0\in\T$ and $t_1\in \T$ or $t_1=\infty$.  We shall assume that at each point $t\in [t_0,t_1)_\T$, at which $u$ is not continuous, $u$ is right-continuous and has a finite left-sided limit if $t$ is left-dense. This allows to solve \eqref{ukl1} step by step. Moreover, for a finite $t_1$ we can always evaluate $x(t_1)$. For $t_1$ being left-scattered we do not need the value of $u$ at $t_1$, and for a left-dense $t_1$ we just take a limit of $x(t)$ at $t_1$.

\begin{definition}
We say that system $\Sigma$ is \emph{positive} if for any $t_0\in\T$, any initial condition $x_0\in\R^n_+$, any
control $u : [t_0,t_1)_\T \rar \R^m_+$ and any $t\in [t_0,t_1]_\T$, the solution $x$ of \eqref{ukl1} satisfies
$x(t)\in\R^n_+$.
\end{definition}

By the separation principle we have the following characterization.

\begin{proposition}
The system $\Sigma$ is positive if and only if $e_A(t,t_0)\in\R^{n\times n}_+$ for every $t,t_0\in\T$ such that
$t\geq t_0$, and $B\in\R^{n\times m}_+$.
\end{proposition}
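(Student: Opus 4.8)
The plan is to build everything on the variation of constants formula of Theorem~\ref{nonh}. Setting $f(t)=Bu(t)$ in \eqref{vc}, the solution of \eqref{ukl1} with $x(t_0)=x_0$ is
\begin{equation}\label{eq:voc}
x(t)=e_A(t,t_0)x_0 + \int_{t_0}^t e_A(t,\sigma(\tau))Bu(\tau)\tr\tau .
\end{equation}
The two summands are the free response and the forced response; analysing them separately is exactly the separation principle invoked above. For sufficiency I would assume $e_A(t,s)\geq 0$ whenever $t\geq s$ and $B\geq 0$, and take $x_0\in\R^n_+$ together with a control $u$ valued in $\R^m_+$. The first term of \eqref{eq:voc} is a product of a nonnegative matrix and a nonnegative vector, hence lies in $\R^n_+$. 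For the integral, note that for every $\tau\in[t_0,t)_\T$ we have $\tau<t$ and $t\in\T$, so $\sigma(\tau)=\inf\{s\in\T:s>\tau\}\leq t$; thus $e_A(t,\sigma(\tau))\geq 0$ by hypothesis, and the integrand $e_A(t,\sigma(\tau))Bu(\tau)$ is, componentwise, a sum of products of nonnegative numbers. Since the delta integral preserves nonnegativity of an rd-continuous nonnegative integrand, each component of the integral is $\geq 0$, and hence $x(t)\in\R^n_+$, i.e. $\Sigma$ is positive.

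For necessity I would assume $\Sigma$ is positive and recover the two conditions by exploiting the separation in \eqref{eq:voc}. To obtain $e_A(t,t_0)\geq 0$, switch off the control: take $u\equiv 0$ and $x_0=e_k$. Then \eqref{eq:voc} gives $x(t)=e_A(t,t_0)e_k$, the $k$th column of $e_A(t,t_0)$, which positivity forces into $\R^n_+$; letting $k$ run over $1,\dots,n$ yields $e_A(t,t_0)\geq 0$ for all $t\geq t_0$. To obtain $B\geq 0$, switch off the initial state: fix a nonmaximal $t_0\in\T$, take $x_0=0$ and the constant control $u\equiv e_j$. Then $x(t_0)=0$ and, since $x$ solves \eqref{ukl1}, the delta derivative at $t_0$ is $x^\tr(t_0)=Ax(t_0)+Bu(t_0)=Be_j$.

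The step I expect to be the main obstacle is converting the sign information on $x$ into the sign of $Be_j=x^\tr(t_0)$, because the meaning of the delta derivative splits according to the local structure of $t_0$. If $t_0$ is right-scattered, then $x^\tr(t_0)=\bigl(x(\sigma(t_0))-x(t_0)\bigr)/\mu(t_0)=x(\sigma(t_0))/\mu(t_0)\geq 0$, since $x(\sigma(t_0))\in\R^n_+$ and $\mu(t_0)>0$. If $t_0$ is right-dense, then $x^\tr(t_0)$ is the forward limit $\lim_{s\to t_0^+}\bigl(x(s)-x(t_0)\bigr)/(s-t_0)=\lim_{s\to t_0^+}x(s)/(s-t_0)$, in which every difference quotient is nonnegative because $x(s)\in\R^n_+$ and $s-t_0>0$; hence the limit is $\geq 0$. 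In both cases $Be_j\geq 0$, and letting $j$ range over $1,\dots,m$ gives $B\geq 0$, which together with the free-response argument completes the proof.
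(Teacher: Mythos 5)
Your proof is correct and is exactly the argument the paper has in mind: the paper omits the proof entirely, saying only that the characterization follows ``by the separation principle'' and that ``the proof is very similar to the proof of the continuous-time case,'' and your argument is precisely that standard argument (sufficiency via nonnegativity of both terms in the variation-of-constants formula; necessity via $u\equiv 0$, $x_0=e_k$ for the exponential, and via $x_0=0$, $u\equiv e_j$, giving $Be_j=x^\tr(t_0)\geq 0$, for $B$) transplanted to time scales. The only genuinely time-scale-specific point --- that $x^\tr(t_0)\geq 0$ must be argued separately at right-scattered points (difference quotient with $\mu(t_0)>0$) and at right-dense points (forward limit of nonnegative quotients) --- is handled correctly in your proposal.
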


The proof is very similar to the proof of the continuous-time case.

To state criteria of nonnegativity of the exponential matrix, let $\bar{\mu}=\sup \{ \mu(t) : t\in\T \}$ and
$A_\T:= A+I/\bar{\mu}$, where $I/\infty$ means the zero $n\times n$ matrix and $I/0$ is a diagonal matrix with
$\infty$ on the diagonal. Thus for $\T=\R$, $A_\T$ is obtained from $A$ by replacing the elements on the
diagonal by $\infty$, for $\T=\Z$, $A_\T=A+I$, and for $\T=q^\N$, $A_\T=A$.

\begin{theorem}
The exponential matrix $e_A(t,t_0)$ is nonnegative for every $t,t_0\in\T$ such that $t\geq t_0$ if and only if
$A_\T\in\bar{\R}^{n\times n}_+$.
\end{theorem}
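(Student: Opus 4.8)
The plan is to exploit the step-by-step structure of the forward solution, treating right-scattered and right-dense points separately, since the local behaviour of $e_A(\cdot,\cdot)$ is governed by the one-step transition matrix $I+\mu(t)A$ at a right-scattered point and by the plain flow of $A$ at a right-dense point. Throughout I would use the semigroup property of Proposition~\ref{p10} to pass between local and global nonnegativity, and the fact that $\T$ has at least $n+1$ points, so a non-maximal point always exists.

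For necessity ($\Rightarrow$), I would first note that at any right-scattered $t$ the single delta step gives $e_A(\sigma(t),t)=I+\mu(t)A$, so nonnegativity forces $\mu(t)a_{ij}\geq 0$ for $i\neq j$ and $1+\mu(t)a_{ii}\geq 0$, i.e. $a_{ij}\geq 0$ and $a_{ii}\geq -1/\mu(t)$. Since $\mu(t)\leq\bar\mu$ for all $t$ and $x\mapsto -1/x$ is increasing, the supremum of $-1/\mu(t)$ over right-scattered $t$ equals $-1/\bar\mu$, whence $a_{ii}\geq -1/\bar\mu$; this is precisely nonnegativity of the diagonal of $A_\T=A+I/\bar\mu$, the inequality being vacuous when $\bar\mu=0$ (where that diagonal is $\infty$) or when $\bar\mu=\infty$ (where it reduces to $a_{ii}\geq 0$, recovered by letting $\mu(t)\to\infty$). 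To obtain the off-diagonal condition in all cases, including time scales with no scattered points, I would fix a non-maximal point $t$ and observe that $s\mapsto(e_A(s,t))_{ij}$ vanishes at $s=t$ and stays nonnegative for $s\geq t$, so its delta derivative at $t$, namely $a_{ij}$, is nonnegative. Together these give $A_\T\in\bar{\R}^{n\times n}_+$.

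For sufficiency ($\Leftarrow$), assume $A_\T\in\bar{\R}^{n\times n}_+$, so $a_{ij}\geq 0$ for $i\neq j$ and $a_{ii}\geq -1/\bar\mu$. The goal is to show $\R^n_+$ is forward invariant for $x^\tr=Ax$; taking the initial data $x_0=e_k$ then yields nonnegativity of each column of $e_A(t,t_0)$. At a right-scattered $t$ the bound $\mu(t)\leq\bar\mu$ gives $a_{ii}\geq -1/\mu(t)$, hence $I+\mu(t)A\geq 0$, so the one-step map $x(t)\mapsto x(\sigma(t))=(I+\mu(t)A)x(t)$ preserves $\R^n_+$. To control the right-dense behaviour I would argue by first exit: supposing $x(t)=e_A(t,t_0)e_k$ leaves $\R^n_+$, set $t^\ast=\inf\{t\geq t_0:x(t)\notin\R^n_+\}$; rd-continuity forces $x(t^\ast)\in\R^n_+$, and forward uniqueness rules out $x(t^\ast)=0$. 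If $t^\ast$ is right-scattered the one-step map gives $x(\sigma(t^\ast))\geq 0$ while $(t^\ast,\sigma(t^\ast))_\T=\emptyset$, contradicting the definition of $t^\ast$. If $t^\ast$ is right-dense, a sequence $t_k\downarrow t^\ast$ with a fixed coordinate $x_i(t_k)<0$ yields $x_i(t^\ast)=0$ in the limit, and then $x_i^\tr(t^\ast)=\sum_{j\neq i}a_{ij}x_j(t^\ast)\geq 0$ by the off-diagonal condition.

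The main obstacle is exactly this right-dense step, because a right-dense point may be approached from the right only through scattered points, so no genuine interval of continuous flow is available and the inequality $x_i^\tr(t^\ast)\geq 0$ need not be strict. I would resolve this by the perturbation device: replace $A$ by the matrix obtained by adding $\eps>0$ to every off-diagonal entry, which still satisfies the hypothesis and additionally makes all off-diagonal entries strictly positive. For the perturbed matrix, $x_i^\tr(t^\ast)\geq\eps\sum_{j\neq i}x_j(t^\ast)>0$ (as $x(t^\ast)\neq 0$), and positivity of the delta derivative at the right-dense point $t^\ast$ forces $x_i(s)>0=x_i(t^\ast)$ for $s>t^\ast$ near $t^\ast$, contradicting $x_i(t_k)<0$. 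Thus the perturbed exponential is nonnegative, and letting $\eps\to 0^+$ together with continuity of $e_A(t,t_0)$ in the entries of $A$ gives $e_A(t,t_0)\geq 0$, completing the argument.
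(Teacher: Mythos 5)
Your proof is correct, but the sufficiency direction takes a genuinely different route from the paper's. The paper argues locally and then globalizes: at a right-scattered point it uses the one-step transition $e_A(\sigma(t_0),t_0)=I+\mu(t_0)A\geq 0$, at a right-dense point it uses the first-order approximation $I+A(t-t_0)$ of $e_A(t,t_0)$, and it patches these local facts together with the semigroup property $e_A(t,s)e_A(s,r)=e_A(t,r)$. You instead prove forward invariance of $\R^n_+$ by a first-exit-time argument combined with an $\eps$-perturbation of the off-diagonal entries --- the classical quasimonotonicity argument from ODE comparison theory, transplanted to time scales. Your version buys two things: it treats arbitrary interleavings of dense and scattered points uniformly (the paper's patching implicitly presumes a decomposition of $[t_0,t]_\T$ into finitely many jumps and dense stretches, which is problematic for, say, Cantor-like scales), and the perturbation cleanly handles vanishing off-diagonal entries, where the paper's inference ``$I+A(t-t_0)>0$ approximates $e_A(t,t_0)$, hence $e_A(t,t_0)>0$'' is delicate because nonstrict nonnegativity is not preserved under approximation. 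The cost is that the limit $\eps\to 0^+$ requires continuous dependence of $e_A(t,t_0)$ on the entries of $A$; this is true on time scales (via the integral equation $X(t)=I+\int_{t_0}^tAX(\tau)\Delta\tau$ and Gronwall's inequality), but you should state it as a lemma rather than assert it. Two small imprecisions to fix in a final write-up: at a left-scattered first-exit point $t^*$ it is not rd-continuity but your own one-step map applied at $\rho(t^*)$ that yields $x(t^*)\in\R^n_+$ (the ingredient is already in your proof, so this is cosmetic); and the pigeonhole selection of a fixed coordinate $i$ with $x_i(t_k)<0$ along a subsequence deserves a sentence. Your necessity argument coincides with the paper's at scattered points, while your observation that the delta derivative of $s\mapsto(e_A(s,t))_{ij}$ at $s=t$ equals $a_{ij}$ and must be nonnegative is a cleaner, unified replacement for the paper's separate treatment of the case $\bar{\mu}=0$.
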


\begin{proof}
``$\Leftarrow$''  Assume that $A_\T\geq 0$. If $\mu(t_0)>0$, then $A+I/\mu(t_0)\geq 0$. This means that
$e_A(\sigma(t_0),t_0)=\mu(t_0)A+I \geq 0$. If $\mu(t_0)=0$, then for $t>t_0$ and close to $t_0$, $I+A(t-t_0)>0$.
The last term approximates $e_A(t,t_0)$. Since the exponential matrix is continuous (with respect to $t$), then
also $e_A(t,t_0)>0$ for $t>t_0$ and close to $t_0$. To achieve nonnegativity of $e_A(t,t_0)$ for all $t\in\T$,
$t>t_0$, we have to use the semigroup property of the exponential matrix: $e_A(t,s)e_A(s,\tau)=e_A(t,\tau)$ for
$\tau<s<t$ and $\tau,s,t\in\T$. \\
``$\Rightarrow$'' Assume that $e_A(t,t_0)$ is nonnegative for $t,t_0\in\T$ such that $t\geq t_0$. Suppose first
that $\bar{\mu}>0$ and chose $t_0\in\T$ with $\mu(t_0)>0$. Then $e_A(\sigma(t_0),t_0)=I+\mu(t_0)A\geq 0$. This
means that also $A+I/\mu(t_0)\geq 0$. As it holds for all $t_0\in\T$ with $\mu(t_0)>0$,
$A_\T=A+I/\bar{\mu}$ is nonnegative. If $\bar{\mu}=0$, then $\T$ is a standard interval. The exponential matrix
is then standard $e^{A(t-t_0)}$. For $t$ close to $t_0$, it may be approximated by $I+A(t-t_0)$. Nonnegativity
of the exponential matrix implies that $I+A(t-t_0)>0$ for $t>t_0$ and close to $t_0$. This holds only if all
elements of $A$ outside the diagonal are nonnegative. Thus again $A_\T$ is nonnegative.
\end{proof}

\begin{corollary}
The system $\Sigma$ is positive if and only if $A_\T\in\bar{\R}^{n\times n}_+$ and $B\in\R^{n\times m}_+$.
\end{corollary}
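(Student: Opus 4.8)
The plan is to obtain this statement as an immediate consequence of the two results that directly precede it, so the proof should require no new computation. Recall that the Proposition on positive systems gives the equivalence: $\Sigma$ is positive if and only if $e_A(t,t_0)\in\R^{n\times n}_+$ for all $t,t_0\in\T$ with $t\geq t_0$ \emph{and} $B\in\R^{n\times m}_+$. The Theorem characterizing nonnegativity of the exponential matrix gives a second equivalence: $e_A(t,t_0)$ is nonnegative for all $t,t_0\in\T$ with $t\geq t_0$ if and only if $A_\T\in\bar{\R}^{n\times n}_+$. The whole argument consists of substituting the latter characterization into the former.

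Concretely, first I would invoke the Proposition to replace the statement ``$\Sigma$ is positive'' by the conjunction of two conditions: nonnegativity of the exponential matrix over all admissible pairs $t\geq t_0$, and $B\geq 0$. Then I would apply the Theorem to the first of these two conditions, rewriting ``$e_A(t,t_0)\geq 0$ for all $t\geq t_0$'' as the single matrix condition $A_\T\in\bar{\R}^{n\times n}_+$. Since the condition $B\in\R^{n\times m}_+$ is entirely independent of $A$ and of the exponential matrix, it simply carries through unchanged, and the two conditions combine to yield the asserted equivalence.

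The one point worth checking explicitly is that the quantifier ranges in the two cited results coincide: both speak of \emph{all} $t,t_0\in\T$ satisfying $t\geq t_0$, so the equivalence provided by the Theorem can be inserted into the Proposition without any weakening or mismatch in the range of $t,t_0$. Beyond this bookkeeping there is no genuine obstacle; the corollary is a pure composition of the preceding biconditionals, and the argument can be stated in a single sentence once the two quantifier ranges are observed to match.
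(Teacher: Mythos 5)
Your proof is correct and coincides with the paper's intended argument: the corollary is stated without proof precisely because it is the immediate composition of the preceding Proposition (positivity $\Leftrightarrow$ nonnegativity of $e_A(t,t_0)$ for all $t\geq t_0$ together with $B\geq 0$) with the Theorem (nonnegativity of the exponential $\Leftrightarrow$ $A_\T\in\bar{\R}^{n\times n}_+$). Your explicit check that the quantifier ranges over $t,t_0$ agree in both cited results is exactly the only point that needs verifying, so nothing is missing.
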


\begin{rem}
\textup{An $n\times n$ matrix with nonnegative elements outside the diagonal is called a \emph{Metzler matrix}. Thus in
the continuous-time case, the exponential matrix $e_A(t,t_0)$ is nonnegative for every  $t>t_0$ if and only if $A$ is a
Metzler matrix. In that case the elements on the diagonal may be arbitrary. On the other hand, if the time scale
$\T$ is the set $\Z$ of integer numbers, then $\mu\equiv 1$ and nonnegativity of the exponential matrix is
equivalent to $A+I\geq 0$. In that case the delta differential equation $x^\Delta(k)=Ax(k)$
may be rewritten in the shift form as $x(k+1)=(A+I)x(k)$.
Thus the condition $A+I\geq 0$ agrees with the necessary and sufficient condition of nonnegativity for
discrete-time systems of the form $x(k+1)=Fx(k)$, where $k\in\Z$ (see \cite{FR,Ka}).}
\end{rem}

\section{Controllability} \label{sec4}

If $\Sigma$ is a positive system, then for a nonnegative initial condition $x_0$ and a nonnegative control $u$,
the trajectory $x$ stays in $\R^n_+$. One may be interested in properties of the reachable sets of the system.
For simplicity we assume that the initial condition is $x_0=0$. Let $x(t_1,t_0,0,u)$ mean the trajectory of the system
corresponding to the initial condition $x(t_0)=0$ and the control $u$, and evaluated at time $t_1$. We shall define various controllability properties.

\begin{definition} Let $t_0,t_1\in\T$, $t_0<t_1$.
The \emph{positive reachable set} (from $0$) of the system $\Sigma$ on the interval $[t_0,t_1]_\T$ is the set
$\mathcal{R}_+^{[t_0,t_1]}$ consisting of all $x(t_1,t_0,0,u)$,
where $u$ is a nonnegative control on $[t_0,t_1)_\T$. \\
   The
\emph{positive reachable set} (from $0$) \emph{for the initial time} $t_0$ of $\Sigma$ is
\[  \mathcal{R}_+^{t_0}=\bigcup_{t_1\in\T, t_1>t_0} \mathcal{R}_+^{[t_0,t_1]} \]
and the \emph{positive reachable set} (from $0$) of $\Sigma$ is
\[  \mathcal{R}_+=\bigcup_{t_0\in\T} \mathcal{R}_+^{t_0}. \]
The system $\Sigma$ is \emph{positively accessible on $[t_0,t_1]_\T$} if $\mathcal{R}_+^{[t_0,t_1]}$ has a nonempty interior, $\Sigma$ is \emph{positively accessible for the initial time $t_0$} if  $\mathcal{R}_+^{t_0}$ has a nonempty interior and $\Sigma$ is \emph{positively accessible} if  $\mathcal{R}_+$ has a nonempty interior.\\
The system $\Sigma$ is \emph{positively reachable on $[t_0,t_1]_\T$} if $\mathcal{R}_+^{[t_0,t_1]}=\R^n_+$,
$\Sigma$ is \emph{positively reachable for the initial time $t_0$} if $\mathcal{R}_+^{t_0}=\R^n_+$ and $\Sigma$ is
\emph{positively reachable} if $\mathcal{R}_+=\R^n_+$.
\end{definition}

\begin{rem}
\textup{Accessibility was first introduced for nonlinear systems for which it is a good substitute of reachability, as
the latter is often too restrictive property. The same happens for positive systems. Positive accessibility means
precisely accessibility but with nonnegative controls.}
\end{rem}

The following implications follow directly from definitions:
\begin{proposition}
$\Sigma$ is positively accessible on $[t_0,t_1]_\T$ $\Rightarrow$ $\Sigma$ is positively accessible for the initial time $t_0$ $\Rightarrow$ $\Sigma$ is positively accessible.\\
$\Sigma$ is positively reachable on $[t_0,t_1]_\T$ $\Rightarrow$
$\Sigma$ is positively reachable for the initial time $t_0$ $\Rightarrow$ $\Sigma$ is
positively reachable.\\
Positive reachability (on $[t_0,t_1]_\T$) implies positive accessibility (on $[t_0,t_1]_\T$).
\end{proposition}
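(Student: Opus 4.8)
The plan is to derive all five implications from two elementary observations together with monotonicity of the set-theoretic interior, exactly as the phrase ``follow directly from definitions'' suggests. First I would record the chain of inclusions that is immediate from writing the reachable sets as unions:
\[ \mathcal{R}_+^{[t_0,t_1]} \subseteq \mathcal{R}_+^{t_0} \subseteq \mathcal{R}_+ . \]
The first inclusion holds because $\mathcal{R}_+^{t_0}$ is by definition the union of the sets $\mathcal{R}_+^{[t_0,t_1]}$ over all admissible $t_1>t_0$, so each such set sits inside the union; the second holds for the analogous reason, $\mathcal{R}_+$ being the union of the sets $\mathcal{R}_+^{s}$ over all initial times $s\in\T$. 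Second, since $\Sigma$ is positive, the zero initial state together with any nonnegative control yields a trajectory that remains in $\R^n_+$, so each of the three reachable sets is contained in $\R^n_+$.

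For the two accessibility implications I would invoke the trivial monotonicity of interior: if $S\subseteq U$ and $\mathrm{int}\,S\neq\emptyset$, then $\mathrm{int}\,U\neq\emptyset$. Applying this to the two inclusions above immediately gives that positive accessibility on $[t_0,t_1]_\T$ implies positive accessibility for the initial time $t_0$, and that positive accessibility for $t_0$ implies positive accessibility. For the two reachability implications I would instead squeeze between copies of $\R^n_+$: if $\mathcal{R}_+^{[t_0,t_1]}=\R^n_+$, then
\[ \R^n_+ = \mathcal{R}_+^{[t_0,t_1]} \subseteq \mathcal{R}_+^{t_0} \subseteq \R^n_+ \]
forces $\mathcal{R}_+^{t_0}=\R^n_+$, and the same argument applied to $\mathcal{R}_+^{t_0}\subseteq\mathcal{R}_+\subseteq\R^n_+$ yields reachability for the whole system.

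Finally, for the implication that positive reachability on $[t_0,t_1]_\T$ implies positive accessibility on $[t_0,t_1]_\T$, I would observe that $\R^n_+$ is full-dimensional in $\R^n$: its interior is the open orthant $\{x\in\R^n : x_i>0 \text{ for all } i\}$, which is nonempty. Hence if $\mathcal{R}_+^{[t_0,t_1]}=\R^n_+$ then this reachable set has nonempty interior, which is precisely positive accessibility on $[t_0,t_1]_\T$. I do not expect a genuine obstacle, since everything reduces to set inclusions; the only point deserving a moment's care is that the interior in the definition of accessibility must be taken in the ambient space $\R^n$, where $\R^n_+$ has nonempty interior, rather than relative to $\R^n_+$ itself.
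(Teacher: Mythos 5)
Your proof is correct and matches the paper's approach: the paper offers no written argument beyond stating that these implications ``follow directly from definitions,'' and your unpacking (the inclusion chain $\mathcal{R}_+^{[t_0,t_1]}\subseteq\mathcal{R}_+^{t_0}\subseteq\mathcal{R}_+$, monotonicity of the interior, and the squeeze between copies of $\R^n_+$) is exactly that intended argument. You also correctly identified the one point where something beyond pure set manipulation is used, namely that positivity of $\Sigma$ is what guarantees $\mathcal{R}_+^{t_0}\subseteq\R^n_+$ and $\mathcal{R}_+\subseteq\R^n_+$, without which the reachability implications could fail.
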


We have also a useful inclusion:
\begin{proposition}
If $\tau_0<t_0<t_1$ , then $\mathcal{R}_+^{[t_0,t_1]}\subseteq \mathcal{R}_+^{[\tau_0,t_1]}$ and $\mathcal{R}_+^{t_0} \subseteq \mathcal{R}_+^{\tau_0}$.
\end{proposition}
\begin{proof}
Since we start at $x_0=0$, to reach $x_1\in \mathcal{R}_+^{[t_0,t_1]}$ for the initial time $\tau_0$ put $u(\tau)=0$ for $\tau\in [\tau_0,t_0)$. Then switch to the control that was used to reach $x_1$ for the initial time $t_0$.
\end{proof}

\begin{rem}
Since $\T$ may not be homogeneous, in general $\mathcal{R}_+^{t_0}$ depends on $t_0$.
\end{rem}

\begin{theorem} Let $\Sigma$ be a positive system and $t_0,t_1$ be elements of $\mathbb{T}$ such that $[t_0,t_1]_\T$ consists of at least $n+1$ elements. The following conditions are equivalent:\\
a) $\Sigma$ is positively accessible on $[t_0,t_1]_\T$,\\
b) $\Sigma$ is positively accessible for the initial time $t_0$,\\
c) $\Sigma$ is positively accessible,\\
d) ${\rm rank}(B,AB,\ldots,A^{n-1}B)=n$.
\end{theorem}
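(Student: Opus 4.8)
The plan is to prove the cycle $(d)\Rightarrow(a)\Rightarrow(b)\Rightarrow(c)\Rightarrow(d)$, noting that $(a)\Rightarrow(b)\Rightarrow(c)$ is already supplied by the elementary implications in the preceding Proposition. The conceptual key is that positive accessibility is really a statement about the linear span of a convex cone. Each positive reachable set is a convex cone containing $0$: it is closed under addition of controls and under multiplication of a control by a nonnegative scalar, and the zero control gives $0$. A convex cone containing the origin has nonempty interior in $\R^n$ if and only if its affine hull — which, since it contains $0$, coincides with its linear span — is all of $\R^n$. So the first step is to record this fact and reduce each of $(a)$, $(b)$, $(c)$ to the statement that the span of the corresponding positive reachable set is $\R^n$.

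Next I would identify that span with the ordinary, signed-control, reachable subspace. Writing $x(t_1,t_0,0,u)=\int_{t_0}^{t_1}e_A(t_1,\sigma(\tau))Bu(\tau)\tr\tau$, every piecewise continuous control splits as $u=u^+-u^-$ with $u^\pm\geq 0$, so by linearity of the solution map each signed-control endpoint is a difference of two points of $\mathcal{R}_+^{[t_0,t_1]}$. Hence $\mathrm{span}\,\mathcal{R}_+^{[t_0,t_1]}$ equals the linear reachable subspace $\mathcal{R}^{[t_0,t_1]}:=\{x(t_1,t_0,0,u):u \text{ piecewise continuous}\}$, and similarly for the other two positive reachable sets. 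Positive accessibility, in each of the three senses, is thereby equivalent to the corresponding linear reachable set being full-dimensional, and the sign constraint is seen not to shrink the span.

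For $(c)\Rightarrow(d)$ I argue by contraposition. If $\rank(B,AB,\dots,A^{n-1}B)<n$, choose $v\neq 0$ with $v^\top A^kB=0$ for $k=0,\dots,n-1$; by Cayley--Hamilton this persists for all $k\geq 0$. Since $e_A(t,s)$ solves $X^{\tr}=AX$ it lies in the commutative algebra generated by $A$, so $e_A(t_1,\sigma(\tau))$ is a scalar polynomial in $A$ of degree at most $n-1$. Consequently $v^\top e_A(t_1,\sigma(\tau))B\equiv 0$ and $v^\top x(t_1,t_0,0,u)=0$ for every interval and every control, so $\mathcal{R}_+$ lies in the hyperplane $v^\perp$ and has empty interior, contradicting $(c)$.

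Finally, $(d)\Rightarrow(a)$ is where the hypothesis that $[t_0,t_1]_\T$ contains at least $n+1$ elements enters, and this is the main obstacle. Here I would invoke the characterization of ordinary reachability of constant-coefficient linear systems on time scales (as in \cite{BP2,BP3}): when the interval carries at least $n+1$ points the rank condition $(d)$ is equivalent to $\mathcal{R}^{[t_0,t_1]}=\R^n$. A self-contained argument would prove the nontrivial inclusion $\mathrm{Im}(B,AB,\dots,A^{n-1}B)\subseteq\mathcal{R}^{[t_0,t_1]}$ by testing any $v\perp\mathcal{R}^{[t_0,t_1]}$ — equivalently $v^\top e_A(t_1,\sigma(\tau))B=0$ on $[t_0,t_1)_\T$ — against the adjoint identity that the delta derivative of $\tau\mapsto e_A(t_1,\tau)$ is $-e_A(t_1,\sigma(\tau))A$, extracting $v^\top A^kB=0$ from the values at $n$ successive points; the count $n+1$ is exactly what guarantees enough independent evaluations, and it is this step (rather than the cone-to-span reduction) that genuinely uses the size of $\T$. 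Granting $\mathcal{R}^{[t_0,t_1]}=\R^n$, the reduction above shows $\mathcal{R}_+^{[t_0,t_1]}$ spans $\R^n$, hence has nonempty interior, so $\Sigma$ is positively accessible on $[t_0,t_1]_\T$, which closes the cycle.
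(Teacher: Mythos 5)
Your proof is correct, and in two of the four implications it takes a genuinely different route from the paper. The chain (a)$\Rightarrow$(b)$\Rightarrow$(c) is handled exactly as in the paper, via the inclusions of reachable sets. For (c)$\Rightarrow$(d) the paper passes through the unconstrained reachable set $\mathcal{R}$: it is a linear subspace (citing \cite{BP3}), positive accessibility forces $\mathcal{R}=\R^n$, and the rank condition then follows from the cited characterization of ordinary reachability. You instead argue by contraposition in a self-contained way: if $\rank(B,AB,\ldots,A^{n-1}B)<n$, pick $v\neq 0$ with $v^\top A^kB=0$ for all $k$ (Cayley--Hamilton), note that $e_A(t_1,\sigma(\tau))$ lies in the algebra generated by $I$ and $A$, hence is a polynomial in $A$ of degree at most $n-1$, and conclude that every reachable point lies in the hyperplane $v^\perp$. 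This buys independence from the cited rank characterization and avoids the (not entirely obvious) fact that the union $\mathcal{R}$ over all intervals is a subspace; it does deserve one more line of justification, namely that forward solutions of $X^{\tr}=AX$ starting at $I$ remain in that algebra (immediate at right-scattered points from $X(\sigma(t))=(I+\mu(t)A)X(t)$, and on right-dense stretches by the classical invariance of $A$-invariant subspaces under the flow). For (d)$\Rightarrow$(a), both you and the paper rely on \cite{BP3} for the key external fact that the rank condition yields ordinary reachability on any interval with at least $n+1$ points; the paper then realizes $\mathcal{R}^{[t_0,t_1]}$ as the image of a surjective linear map on a finite-dimensional space of piecewise-constant controls and uses that such a map carries the positive orthant onto a set with nonempty interior, whereas you split $u=u^+-u^-$ to get $\mathrm{span}\,\mathcal{R}_+^{[t_0,t_1]}=\mathcal{R}^{[t_0,t_1]}=\R^n$ and invoke the fact that a convex cone containing $0$ whose span is $\R^n$ has nonempty interior. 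Your version is cleaner in that it needs no parameterization of controls.

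One caveat on your framing: the blanket claim that \emph{each} positive reachable set is a convex cone is justified only for $\mathcal{R}_+^{[t_0,t_1]}$. The sets $\mathcal{R}_+^{t_0}$ and $\mathcal{R}_+$ are unions of such cones over different terminal (and initial) times; two points reached at different times $t_1\neq t_1'$ cannot be combined by ``adding controls'', and on nonhomogeneous time scales these cones need not even be nested (the system of Remark~\ref{r24} gives $e_1\in\mathcal{R}_+^{[0,1]}$ but $e_1\notin\mathcal{R}_+^{[0,2]}$). So the advertised reduction of all three of (a), (b), (c) to span statements is established only for (a). Fortunately your cycle never uses the unproved cases: (a)$\Rightarrow$(b)$\Rightarrow$(c) needs only inclusions, and your (c)$\Rightarrow$(d) needs only that a subset of a hyperplane has empty interior. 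The proof therefore stands once that introductory sentence is restricted to the fixed-interval reachable set.
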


\begin{proof}
a) $\Rightarrow$ b). This follows from the fact that $\mathcal{R}_+^{[t_0,t_1]}$ is contained in $\mathcal{R}_+^{t_0}$.\\
b) $\Rightarrow$ c). This follows from the fact that $\mathcal{R}_+^{t_0}$ is contained in $\mathcal{R}_+$.\\
c) $\Rightarrow$ d). Assume that $\Sigma$ is positively accessible. The reachable set $\mathcal{R}$ of $\Sigma$ is
the set of states that can be reached with controls that are not necessarily nonnegative. It is clear that
$\mathcal{R}_+$ is contained in $\mathcal{R}$. Moreover, $\mathcal{R}$ is a linear subspace of $\R^n$ (see
\cite{BP3}). Positive accessibility implies that $\mathcal{R}$ contains an open subset. Therefore $\mathcal{R}$
must be equal to $\R^n$, which means that $\Sigma$ is reachable (controllable) from $0$. This is characterized
by the condition ${\rm rank}(B,AB,\ldots,A^{n-1}B)=n$ (see \cite{BP3}).\\
d) $\Rightarrow$ a). The condition ${\rm rank}(B,AB,\ldots,A^{n-1}B)=n$ implies reachability of $\Sigma$ from $0$ on arbitrary interval consisting of at least $n+1$ points
(see \cite{BP3}). Actually the states can be obtained with the aid of piecewise constant controls with at most
$n-1$ switching at fixed instances. Thus the reachable set $\mathcal{R}^{[t_0,t_1]}$ on $[t_0,t_1]_\T$ can be described as the image of a linear map defined on a
finite-dimensional space. This map restricted to positive controls will give the set with the nonempty interior
and this means positive accessibility.
\end{proof}

To study positive reachability let us introduce a modified Gram matrix related to the control system.

\begin{definition} \label{df21}
Let $M\subseteq \{1,\ldots,m\}$ and $t_0,t_1\in\T$, $t_0<t_1$. For each $k\in M$ let $S_k$ be a subset of $[t_0,t_1)_\T$ that is a union of finitely many disjoint intervals of $\T$ of the form $[\tau_0,\tau_1)_\T$, and let $\mathcal{S}_M=\{S_k : k\in M\}$. By \emph{the Gram matrix of system \eqref{ukl1} corresponding to $t_0$, $t_1$, $M$ and $\mathcal{S}_M$} we mean the matrix
\begin{equation}\label{gram}
W:=W_{t_0}^{t_1}(M,\mathcal{S}_M):=\sum_{k\in M} \int_{S_k} e_A(t_1,\sigma(\tau))b_kb_k^Te_A(t_1,\sigma(\tau))^T \Delta\tau.
\end{equation}
\end{definition}

Then we have the following characterization:

\begin{theorem} \label{th21}
Let $t_0,t_1\in\T$, $t_0<t_1$.  System \eqref{ukl1} is positively reachable on $[t_0,t_1]_\T$ iff there are $M\subseteq \{1,\ldots,m\}$ and the family $\mathcal{S}_M=\{S_k : k\in M\}$ of subsets of $[t_0,t_1]_\T$ such that the matrix $W=W_{t_0}^{t_1}(M,\mathcal{S}_M)$ is monomial.
\end{theorem}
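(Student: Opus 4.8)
The plan is to reduce the whole statement to the geometry of a convex cone generated by the columns of the resolvent, and then to read the monomiality of $W$ off pointwise. By the variation-of-constants formula (Theorem~\ref{nonh}) with $x_0=0$, the state reached at $t_1$ by a nonnegative control $u=(u_1,\dots,u_m)$ is $x(t_1)=\sum_{k=1}^m\int_{t_0}^{t_1} g_k(\tau)\,u_k(\tau)\,\Delta\tau$, where $g_k(\tau):=e_A(t_1,\sigma(\tau))b_k$ and $b_k$ is the $k$-th column of $B$. Since $\Sigma$ is positive, each $g_k(\tau)\in\R^n_+$, so $\mathcal{R}_+^{[t_0,t_1]}$ is a convex cone contained in $\R^n_+$, namely the cone generated by the vectors $g_k(\tau)$. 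The first step I would prove is that, because $\R^n_+$ is simplicial with extreme rays $\R_+e_1,\dots,\R_+e_n$, this generated cone equals $\R^n_+$ if and only if for each coordinate $i$ some generator $g_k(\tau)$ lies on the $i$-th axis, i.e. is $i$-monomial; the rigorous form of the ``only if'' is that, looking at the components $j\neq i$ of any representation $e_i=\sum_k\int g_k c_k$, nonnegativity forces $(g_k)_j\equiv0$ wherever $c_k>0$, so $g_k$ is $i$-monomial there. Thus: \emph{$\Sigma$ is positively reachable on $[t_0,t_1]_\T$ iff for every $i$ some $g_k$ is $i$-monomial on a subinterval of $[t_0,t_1)_\T$.}

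The second, algebraic, step analyzes the special structure of $W=\sum_{k\in M}\int_{S_k} g_k g_k^\top\,\Delta\tau$, a symmetric nonnegative matrix. I would first show such a $W$ is monomial if and only if it is diagonal with strictly positive diagonal: if some $W_{ii}=\sum_k\int_{S_k}(g_k)_i^2\,\Delta\tau$ vanished, then $(g_k)_i\equiv0$ on each $S_k$, forcing the whole $i$-th row and column of $W$ to vanish, contradicting monomiality; hence every diagonal entry is positive, and then the unique positive entry in each row must be the diagonal one, so $W$ is diagonal. Next I decode the off-diagonal vanishing pointwise: for $i\neq j$ the integrand of $W_{ij}=\sum_k\int_{S_k}(g_k)_i(g_k)_j\,\Delta\tau=0$ is nonnegative and rd-continuous, so by the quoted property that a nonnegative rd-continuous function of zero integral vanishes, $(g_k)_i(g_k)_j\equiv0$ on each $S_k$. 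Therefore $W$ is monomial iff (a) every $g_k(\tau)$ with $k\in M$, $\tau\in S_k$ is monomial, and (b) for each $i$ some $g_k$ is $i$-monomial on a positive-measure part of $S_k$. Condition (b) is exactly the generator condition of the first step.

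With these reformulations the equivalence nearly falls out, and it remains to match the data. For ``monomial $W\Rightarrow$ reachable'', (b) supplies for each $i$ an input direction $k_i$ and a set $T_i\subseteq S_{k_i}$ on which $g_{k_i}$ is $i$-monomial; the controls supported on the $T_i$ (pairwise disjoint whenever they share an input direction, by (a)) can be scaled and superposed to hit any prescribed $x=\sum_i x_i e_i\in\R^n_+$. For the converse I would, for each $i$, use reachability of $e_i$ to produce a control whose $k$-th component is positive on a set where $g_k$ must be $i$-monomial, and take $M$ to be the collection of resulting indices with each $S_k$ the union of the corresponding time-sets; by the second step the associated $W$ is positive diagonal, hence monomial. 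The main obstacle is precisely this last extraction: reachability of $e_i$ only guarantees that $g_k$ is $i$-monomial almost everywhere on the support of the control, whereas Definition~\ref{df21} requires $S_k$ to be a finite union of honest half-open intervals $[\tau_0,\tau_1)_\T$. Upgrading ``a.e.\ on a positive-measure set'' to ``on such intervals'' is where the structure of the time scale enters: at a right-scattered time a single point $[\tau,\sigma(\tau))_\T$ already carries a rank-one monomial contribution, whereas on right-dense stretches one must use rd-continuity of $g_k$ to locate a genuine subinterval on which $g_k$ is exactly $i$-monomial. Treating these two cases uniformly, and keeping the chosen intervals compatible across the different coordinates, is the delicate part of the argument.
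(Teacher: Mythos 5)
Your proposal is correct in substance and, in the harder direction, follows essentially the same route as the paper: both extract, from a nonnegative-control representation of each $e_i$, an input index $k_i$ and a set of times on which $e_A(t_1,\sigma(\tau))b_{k_i}$ is $i$-monomial (nonnegativity of all integrands plus the vanishing-integral property is the engine in both texts), and both assemble the sets $S_k$ by uniting the time-sets of coordinates that share an input index, observing their disjointness. Where you genuinely differ is the direction ``$W$ monomial $\Rightarrow$ positively reachable'': the paper needs none of your structural analysis (monomial $\Leftrightarrow$ positive diagonal $\Leftrightarrow$ pointwise monomial integrand); it simply writes down the Gram-type control $u_k(\tau)=b_k^Te_A(t_1,\sigma(\tau))^TW^{-1}\bar{x}$ on $S_k$, whose nonnegativity is immediate from positivity of the system together with the fact that a monomial matrix has a positive inverse, and for which $x(t_1)=WW^{-1}\bar{x}=\bar{x}$ in one line. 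Your coordinate-by-coordinate construction is equivalent --- indeed, when $W$ is diagonal your scaled controls coincide with the paper's formula --- but it costs you the extra lemma and an admissibility question for controls supported on the sets $T_i$, which you should resolve by defining $u_{k_i}$ proportional to the $i$th component of $e_A(t_1,\sigma(\tau))b_{k_i}$ on all of $S_{k_i}$, so that rd-continuity is automatic. Finally, the obstacle you flag at the end --- upgrading ``$i$-monomial a.e.\ where the control is positive'' to sets that are finite unions of intervals $[\tau_0,\tau_1)_\T$ as Definition~\ref{df21} demands --- is a real issue, but note that the paper's own proof silently ignores it: it takes $B_i$ to be the full set of times where the integrand is $i$-monomial, with no verification of its interval structure. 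The clean repair is the one you sketch: only positivity of the $i$th diagonal entry is needed, so a single interval per coordinate suffices, and one always exists (a right-scattered point $\tau$ of positivity contributes the atom $[\tau,\sigma(\tau))_\T$, while at a right-dense point of positivity, rd-continuity of the integrand on a continuity piece of $u$ yields an honest subinterval). With that repair your argument is complete, and on this particular point it is more careful than the published proof.
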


\begin{proof}
``$\Leftarrow$''\ Let $\bar{x}\in\R^n_+$. By $\tilde{e}_1$,\ldots, $\tilde{e}_m$ we denote the vectors of the standard basis in $\R^m$. Define control $u:[t_0,t_1)\rar \R^m_+$ by $u(\tau)=\sum_{k\in M} u_k(\tau)\tilde{e}_k$, where $u_k(\tau)=b_k^Te_A(t_1,\sigma(\tau))^TW^{-1}\bar{x}$ for $t\in S_k$ and $u_k(\tau)=0$ for $t\notin S_k$. The control $u$ is nonnegative and
\begin{align*} x(t_1)&=\int_{t_0}^{t_1} e_A(t_1,\sigma(\tau))Bu(\tau)\Delta\tau=
\sum_{k\in M} \int_{t_0}^{t_1} e_A(t_1,\sigma(\tau))b_ku_k(\tau)\Delta\tau\\
&=\sum_{k\in M} \int_{S_k} e_A(t_1,\sigma(\tau))b_kb_k^Te_A(t_1,\sigma(\tau))^TW^{-1}\bar{x}\Delta\tau = \bar{x}.
\end{align*}
Thus \eqref{ukl1} is positively reachable on $[t_0,t_1]_\T$.\\
``$\Rightarrow$''\ Positive reachability implies that all the vectors $e_1,\ldots,e_n$ can be reached using nonnegative controls. Let us fix some $e_i$. Then there is a piecewise continuous nonnegative control $u=(u_1,\ldots,u_m)$ on $[t_0,t_1)_\T$ such that
\[ e_i=\sum_{j=1}^m \int_{t_0}^{t_1} e_A(t_1,\sigma(\tau))b_ju_j(\tau)\Delta\tau. \]
Since all the integrals in the sum are nonnegative, for some $k_i$ the integral
\[ \int_{t_0}^{t_1} e_A(t_1,\sigma(\tau))b_{k_i}u_{k_i}(\tau)\Delta\tau\]
 is an $i$-monomial vector. Then for every $\tau\in [t_0,t_1)_\T$ the vector $e_A(t_1,\sigma(\tau))b_{k_i}u_{k_i}(\tau)$ is either $i$-monomial or $0$. Let $B_i$ be the set all $\tau$ for which $e_A(t_1,\sigma(\tau))b_{k_i}u_{k_i}(\tau)$ is $i$-monomial. Then for $\tau\in B_i$ the matrix \[ e_A(t_1,\sigma(\tau))b_{k_i}b_{k_i}^Te_A(t_1,\sigma(\tau))^T \] is diagonal with the only nonzero element at the $i$th place. The same is true for the matrix $\int_{B_i}e_A(t_1,\sigma(\tau))b_{k_i}b_{k_i}^Te_A(t_1,\sigma(\tau))^T \Delta\tau$. This implies that the matrix
\[ C:=\sum_{i=1}^n \int_{B_i}e_A(t_1,\sigma(\tau))b_{k_i}b_{k_i}^Te_A(t_1,\sigma(\tau))^T \Delta\tau \] is monomial (and diagonal).
Let $M$ consist of all $k_i$ for $i=1,\ldots,n$. Observe that if $k_i=k_j$ for $i\neq j$, then $B_i\cap B_j=\emptyset$. Define $S_k=\bigcup_{k_i=k} B_i$ and let $\mathcal{S}_M=\{S_k : k\in M\}$. Then
\[ C=\sum_{k\in M} \int_{S_k}e_A(t_1,\sigma(\tau))b_{k}b_{k}^Te_A(t_1,\sigma(\tau))^T \Delta\tau=W_{t_0}^{t_1}(M,\mathcal{S}_M),\] so $W_{t_0}^{t_1}(M,\mathcal{S}_M)$ is monomial.
\end{proof}

\begin{corollary} \label{c4}
If the ordinary Gram matrix
\[ W_{t_0}^{t_1}=\int_{t_0}^{t_1} e_A(t_1,\sigma(\tau))BB^Te_A(t_1,\sigma(\tau))^T \Delta\tau \] is monomial, then system \eqref{ukl1} is positively reachable on $[t_0,t_1]_\T$.
\end{corollary}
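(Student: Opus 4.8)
The plan is to recognize the ordinary Gram matrix $W_{t_0}^{t_1}$ as a particular instance of the modified Gram matrix from Definition~\ref{df21}, and then to invoke the sufficiency direction of Theorem~\ref{th21}. Concretely, I would take $M=\{1,\ldots,m\}$, so that all columns of $B$ are used, and for each $k\in M$ set $S_k=[t_0,t_1)_\T$. Each such $S_k$ is trivially a union of finitely many disjoint intervals of the required form $[\tau_0,\tau_1)_\T$ (it is a single such interval), so this is a legitimate choice of the family $\mathcal{S}_M=\{S_k:k\in M\}$.

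The only computation to carry out is the outer-product expansion $BB^T=\sum_{k=1}^m b_kb_k^T$, where $b_k$ denotes the $k$th column of $B$. Substituting this identity into the definition of $W_{t_0}^{t_1}$ and using linearity of the delta integral, I would obtain
\[ W_{t_0}^{t_1}=\int_{t_0}^{t_1} e_A(t_1,\sigma(\tau))\Bigl(\sum_{k=1}^m b_kb_k^T\Bigr)e_A(t_1,\sigma(\tau))^T\Delta\tau = \sum_{k\in M}\int_{S_k} e_A(t_1,\sigma(\tau))b_kb_k^Te_A(t_1,\sigma(\tau))^T\Delta\tau, \]
which is exactly $W_{t_0}^{t_1}(M,\mathcal{S}_M)$ for the choice of $M$ and $\mathcal{S}_M$ described above.

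Thus the hypothesis that $W_{t_0}^{t_1}$ is monomial says precisely that the modified Gram matrix $W_{t_0}^{t_1}(M,\mathcal{S}_M)$ is monomial for this particular index set and family. The ``$\Leftarrow$'' implication of Theorem~\ref{th21} then yields positive reachability of \eqref{ukl1} on $[t_0,t_1]_\T$, which completes the argument. I do not expect any genuine obstacle here: the entire content of the corollary is the observation that the ordinary Gram matrix coincides with the modified one when one selects the full index set $\{1,\ldots,m\}$ together with the full interval $[t_0,t_1)_\T$ for every column. The corollary is therefore an immediate specialization of the theorem, and the verification reduces to the routine matrix identity above.
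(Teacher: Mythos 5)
Your proposal is correct and follows exactly the paper's own argument: the paper also takes $M=\{1,\ldots,m\}$ and $S_k=[t_0,t_1)_\T$ for all $k\in M$, observes that $W_{t_0}^{t_1}=W_{t_0}^{t_1}(M,\mathcal{S}_M)$, and invokes Theorem~\ref{th21}. Your only addition is spelling out the expansion $BB^T=\sum_{k}b_kb_k^T$, which the paper leaves implicit.
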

\begin{proof}
Observe that $W_{t_0}^{t_1}=W_{t_0}^{t_1}(M,\mathcal{S}_M)$ for $M=\{1,\ldots,m\}$ and $S_k=[t_0,t_1)_\T$ for all $k\in M$. Thus positive reachability follows from Theorem~\ref{th21}.
\end{proof}

\begin{rem}
\textup{The condition that $W_{t_0}^{t_1}$ is monomial is not necessary for positive reachability on $[t_0,t_1]_\T$. Consider the system
\begin{equation}\label{eq22}
x^\Delta=\begin{pmatrix} -1 & 1 \\
                          1 & 0
         \end{pmatrix} + \begin{pmatrix} 1 & 1 \\
                                         0 & 1
                           \end{pmatrix}u
\end{equation}
on $\mathbb{T}=\mathbb{Z}$. Choose $t_0=0$ and $t_1=2$. System \eqref{eq22} is positively reachable on $[t_0,t_1]_\T$.
Indeed, let $M=\{1\}$ and $S_1=[0,2)_\T$. Then
\begin{align*} W=&b_1b_1^T + (I+A)b_1b_1^T(I+A)^T\\
=&
\begin{pmatrix} 1  \\
                     0
         \end{pmatrix} (1 , 0)+
         \begin{pmatrix} 0 & 1 \\
                         1 & 1
         \end{pmatrix}\begin{pmatrix} 1  \\
                     0
         \end{pmatrix} (1 , 0)\begin{pmatrix} 0 & 1 \\
                                              1 & 1
                                \end{pmatrix}=\begin{pmatrix} 1 & 0 \\
                                                              0 & 1
                                              \end{pmatrix} \end{align*}
is a monomial matrix. However
\[   W_{t_0}^{t_1}=BB^T + (I+A)BB^T(I+A)^T=\begin{pmatrix} 3 & 3 \\
                                   3 & 6
                           \end{pmatrix} \]
                           is not monomial.}
\end{rem}

\begin{corollary} \label{c5}
If there exists $M\subseteq \{1,\ldots,m\}$ such that the matrix \\${W}_{t_0}^{t_1}(M)=
\int_{t_0}^{t_1} e_A(t_1,\sigma(\tau))\tilde{B}\tilde{B}^Te_A(t_1,\sigma(\tau))^T \Delta\tau$ is monomial, where  $\tilde{B}$ is a submatrix of $B$ consisting of column $b_k, k\in M$, then system \eqref{ukl1} is positively reachable on $[t_0,t_1]_\T$.
\end{corollary}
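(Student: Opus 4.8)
The plan is to recognize Corollary~\ref{c5} as the special case of Theorem~\ref{th21} in which every set $S_k$ is taken to be the whole interval $[t_0,t_1)_\T$, exactly mirroring the proof of Corollary~\ref{c4}. The only work is an algebraic bookkeeping step showing that the matrix $W_{t_0}^{t_1}(M)$ built from the submatrix $\tilde{B}$ coincides with a Gram matrix of the form appearing in Definition~\ref{df21}, after which the conclusion is immediate.

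First I would expand the product $\tilde{B}\tilde{B}^T$. Since $\tilde{B}$ is the submatrix of $B$ whose columns are exactly $b_k$ for $k\in M$, one has $\tilde{B}\tilde{B}^T=\sum_{k\in M} b_k b_k^T$. Substituting this into the integrand and using linearity of the delta integral to move the finite sum outside the integral gives
\[
W_{t_0}^{t_1}(M)=\sum_{k\in M}\int_{t_0}^{t_1} e_A(t_1,\sigma(\tau))\,b_k b_k^T\,e_A(t_1,\sigma(\tau))^T\,\tr\tau.
\]

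Next I would set $S_k=[t_0,t_1)_\T$ for every $k\in M$ and $\mathcal{S}_M=\{S_k:k\in M\}$. Each such $S_k$ is (trivially) a union of finitely many disjoint intervals of $\T$ of the required form, so $\mathcal{S}_M$ is an admissible family in the sense of Definition~\ref{df21}. Comparing the displayed expression with \eqref{gram}, we see at once that $W_{t_0}^{t_1}(M)=W_{t_0}^{t_1}(M,\mathcal{S}_M)$. By hypothesis the left-hand side is monomial, hence so is $W_{t_0}^{t_1}(M,\mathcal{S}_M)$, and Theorem~\ref{th21} then yields positive reachability of \eqref{ukl1} on $[t_0,t_1]_\T$.

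There is essentially no obstacle here: the statement is a direct specialization of Theorem~\ref{th21}, and the only point requiring any care is the identity $\tilde{B}\tilde{B}^T=\sum_{k\in M}b_kb_k^T$, which is immediate from the definition of $\tilde{B}$. In this sense Corollary~\ref{c5} generalizes Corollary~\ref{c4}, which is the further special case $M=\{1,\ldots,m\}$.
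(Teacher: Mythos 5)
Your proof is correct and follows exactly the paper's own argument: the paper likewise observes that ${W}_{t_0}^{t_1}(M)=W_{t_0}^{t_1}(M,\mathcal{S}_M)$ with $S_k=[t_0,t_1)_\T$ for all $k\in M$ and invokes Theorem~\ref{th21}. You merely spell out the identity $\tilde{B}\tilde{B}^T=\sum_{k\in M}b_kb_k^T$ that the paper leaves implicit, which is a fine level of detail to add.
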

\begin{proof} Observe that ${W}_{t_0}^{t_1}(M)=W_{t_0}^{t_1}(M,\mathcal{S}_M)$ where $S_k=[t_0,t_1)_\T$ for all $k\in M$. Thus positive reachability follows from Theorem~\ref{th21}.
\end{proof}

\begin{rem} \label{r24}
\textup{The condition that ${W}_{t_0}^{t_1}(M)$ is monomial is not necessary for positive reachability on $[t_0,t_1]_\T$. Let the time scale $\mathbb{T}=\{0\}\cup [1,2]\cup \{3\}$. Consider the system
\begin{equation}\label{eq23}
x^\Delta=\begin{pmatrix} -1 & 0 \\
                          1 & -1
         \end{pmatrix} + \begin{pmatrix} 1  \\
                                         0
                           \end{pmatrix}u.
\end{equation}
The system is positively reachable on $[0,3]_\T$. Indeed, let $M=\{1\}$ and let $S_1=[0,1)_\T\cup [2,3)_\T$. Then
\begin{align*}
  W=\int_{[0,1)_\T} e_A&(3,\sigma(\tau))BB^Te_A(3,\sigma(\tau))^T \Delta\tau\\
   &\mbox{   }+
\int_{[2,3)_\T} e_A(3,\sigma(\tau))BB^Te_A(3,\sigma(\tau))^T \Delta\tau \\
 =
\begin{pmatrix} 0 & 0 \\
                0 & e^{-2}
\end{pmatrix} &
+ \begin{pmatrix} 1 & 0 \\
                                0 & 0
         \end{pmatrix}= \begin{pmatrix} 1 & 0 \\
                                        0 & e^{-2}
         \end{pmatrix}
\end{align*}
is monomial. Observe that we remove here the points $t$ with $\mu(t)=0$. This is essential in order to get a monomial matrix. To calculate the full Gram matrix we have to add to $W$ the following matrix
\[ \int_{[1,2)} e_A(3,\sigma(\tau))BB^Te_A(3,\sigma(\tau))^T d\tau. \]
Its off-diagonal elements are equal to $\int_1^2 (3-\tau)e^{-2(3-\tau)}d\tau$. Since they are positive, ${W}_{t_0}^{t_1}(M)$ is not monomial.}
\end{rem}

For $\T=\R$ we obtain very restrictive conditions for positive reachability.

\begin{proposition}[\cite{CA}] \label{p5}
Let $\T=\R$ and $t_0,t_1\in\R$, $t_0<t_1$. System \eqref{ukl1} is positively reachable on $[t_0,t_1]$ iff $A$ is diagonal and $B$ contains an $n\times n$ monomial submatrix (so $m\geq n$).
\end{proposition}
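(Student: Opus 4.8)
The plan is to reduce positive reachability on $[t_0,t_1]$ to the requirement that each standard basis vector $e_i$ be reachable, and then to exploit that for $\T=\R$ one has $e_A(t_1,\sigma(\tau))=e^{A(t_1-\tau)}$, together with positivity of the system. Recall that by the corollary to the nonnegativity theorem, a positive continuous-time system necessarily has $A$ a Metzler matrix and $B\geq 0$; in particular $e^{As}\geq 0$ for all $s\geq 0$ and its diagonal entries are strictly positive. Writing $s=t_1-\tau$, every reachable state has the form $\int_0^{T}e^{As}B\tilde u(s)\,ds$ with $T=t_1-t_0$ and $\tilde u\geq 0$, so the reachable set is built from the nonnegative vectors $e^{As}b_j$.

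For the direction ``$\Leftarrow$'' I would argue directly. If $A$ is diagonal then $e^{As}$ is a diagonal matrix with positive diagonal, and if $b_{k_i}=\beta_i e_i$ is the $i$-monomial column of the given submatrix, then $e^{As}b_{k_i}=\beta_i e^{a_{ii}s}e_i$ is always a positive multiple of $e_i$. Hence, using only the $k_i$-th control channel, any prescribed $\alpha e_i$ with $\alpha\geq 0$ is reached, and combining channels reaches an arbitrary nonnegative vector; alternatively, the Gram matrix formed from these columns with $S_k=[t_0,t_1)_\T$ is diagonal with positive entries, hence monomial, and positive reachability follows from Theorem~\ref{th21} (or Corollary~\ref{c5}).

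The substance is in ``$\Rightarrow$''. Fix $i$. Positive reachability forces $e_i=\int_0^T e^{As}B\tilde u(s)\,ds$ for some $\tilde u\geq 0$; since the integrand is nonnegative and every component except the $i$-th integrates to $0$, the integrand must be $i$-monomial almost everywhere on its support. Comparing $i$-th components shows there is a channel $k$ and a positive-measure set of $s$ on which $e^{As}b_k$ is $i$-monomial and nonzero. I would first show this forces $b_k$ to be $i$-monomial: from $(e^{As}b_k)_p=0$ for $p\neq i$ and nonnegativity of every factor, each column $l$ of $e^{As}$ with $(b_k)_l>0$ must vanish off row $i$, which is impossible unless $l=i$ because the diagonal entry $(e^{As})_{ll}$ is positive. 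Hence $b_k=\beta e_i$. Substituting back, $e^{As}e_i$ must be proportional to $e_i$ on a positive-measure set of $s$; since each entry $(e^{As}e_i)_p$ is a real-analytic function of $s$, vanishing on a set of positive measure forces it to vanish identically, and differentiating at $s=0$ gives $a_{pi}=0$ for all $p\neq i$. Letting $i$ range over $1,\dots,n$ yields that $A$ is diagonal, while the columns $b_k$ produced for the various $i$ form an $n\times n$ monomial submatrix of $B$.

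The main obstacle is the passage from ``$e^{As}b_k$ is $i$-monomial'' to the two structural conclusions. The delicate points are (i) justifying that one genuinely needs a positive-measure set of admissible $s$ — this is where \emph{exact} reachability of the boundary point $e_i$ (as opposed to approximate reachability, or reaching interior points) is essential, and where piecewise-continuous controls cannot concentrate at an isolated $s$ — and (ii) invoking analyticity of $s\mapsto e^{As}$ to upgrade vanishing on a positive-measure set to identical vanishing, which is exactly what excludes any nonzero off-diagonal entry of $A$. Once these are in place, the remaining nonnegativity and Metzler bookkeeping is routine.
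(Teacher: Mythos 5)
Your proof is correct. The ``$\Leftarrow$'' half matches the paper's: a diagonal $A$ and a monomial submatrix of $B$ give a diagonal (hence monomial) Gram matrix, and Corollary~\ref{c5} applies. In the ``$\Rightarrow$'' half you depart from the paper. The paper first invokes Theorem~\ref{th21} to obtain a monomial Gram matrix $W_{t_0}^{t_1}(M,\mathcal{S}_M)$ and then analyzes the rank-one integrands $c(\tau)c(\tau)^T$ with $c(\tau)=e^{A(t_1-\tau)}b_k$, concluding that some column of $e^{A(t_1-\tau)}$ is $i$-monomial, that $b_k$ is monomial, and that $e^{A(t_1-\tau)}$ is diagonal on some interval, whence $A$ is diagonal. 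You instead work straight from exact reachability of each $e_i$: entrywise nonnegativity of the integrand forces $e^{As}b_k$ to be $i$-monomial on a positive-measure set of $s$, and then two facts finish the job --- strict positivity of the diagonal of $e^{As}$ for Metzler $A$ (forcing $b_k=\beta e_i$), and real-analyticity of $s\mapsto(e^{As})_{pi}$ (so vanishing on a positive-measure set gives $(e^{As})_{pi}\equiv 0$, hence $a_{pi}=0$ by differentiating at $s=0$). Your route buys rigor exactly where the paper is terse: the paper's assertion that $e^{A(t_1-\tau)}$ is diagonal ``on some interval'' is delicate, since the intervals it produces for different indices $i$ need not intersect, and its ``which is impossible'' step also tacitly uses positivity of the diagonal of the exponential; your column-by-column analyticity argument sidesteps both issues. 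What the paper's route buys is uniformity: the Gram-matrix detour makes the proposition a direct specialization of the general time-scale criterion, whereas your measure-theoretic and analyticity reasoning is tied to $\T=\R$. One caveat to make explicit: like the paper's own proof, yours uses that $\Sigma$ is a positive system ($A$ Metzler, $B\geq 0$, $e^{As}\geq 0$); this is the standing context of Section~\ref{sec4} rather than part of the proposition's wording, so it should be stated as a hypothesis when you invoke it.
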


\begin{proof}
``$\Leftarrow$''\ Let $\tilde{B}$ denote the monomial submatrix of $B$ and let the indices of columns of $\tilde{B}$ form the set $M$. Then $\tilde{B}\tilde{B}^T$ is a diagonal matrix with all the diagonal elements being positive and so is
\[   W_{t_0}^{t_1}(M)=\int_{t_0}^{t_1} e_A(t_1,\sigma(\tau))\tilde{B}\tilde{B}^Te_A(t_1,\sigma(\tau))^T \Delta\tau. \]
 Thus $W_{t_0}^{t_1}(M)$ is monomial, so system \eqref{ukl1} is positively reachable by Corollary~\ref{c5}. Observe that the proof of this implication works for all time scales.

``$\Rightarrow$''\
Assume that the system is positively reachable on $[t_0,t_1]$. From Theorem~\ref{th21} it follows that for some set $M$ and some family $\mathcal{S}_M$ the Gram matrix $W=W_{t_0}^{t_1}(M,\mathcal{S}_M)$ is monomial. Let $j$th column of $W$ be $i$-monomial. Then for some $k\in M$ and for $\tau$ from some subinterval of $[t_0,t_1)$ the $j$th column of the matrix $e^{A(t_1-\tau)}b_kb_k^T(e^{A(t_1-\tau)})^T$ is $i$-monomial. Let $c(\tau)=e^{A(t_1-\tau)}b_k$. Since  the $j$th column of the matrix $c(\tau)c(\tau)^T$ is $i$-monomial, then $c(\tau)$ must be $j$-monomial and eventually $i=j$. This means that at least one column of $e^{A(t_1-\tau)}$ must be $i$-monomial. As the exponential matrix is invertible such a column must be unique. This implies that $b_k$ is monomial. Moreover the $i$-monomial column of $e^{A(t_1-\tau)}$ must be its $i$th column. Otherwise we would get $0$ on the diagonal of the exponential matrix for all $\tau$ from some interval, which is impossible. Thus  $e^{A(t_1-\tau)}$ is diagonal on some interval, which means that $A$ is also diagonal. Now to get all $n$ monomial columns in $W$ we need $n$ different monomial column $b_k$. Thus $B$ contains an $n\times n$ monomial submatrix.
\end{proof}

\begin{rem} \textup{The statement of Proposition~\ref{p5} holds also for $\T=[a,b]$ and for $\T$ being a closed half-line. However it does not hold for disjoint union of closed intervals. The example given in Remark~\ref{r24} may be considered on a bigger time scale: $\T=[-1,0]\cup [1,2]\cup [3,4]$. Neither $A$ nor $B$ satisfy the requirement given in Proposition~\ref{p5}. However the system is positively reachable on $[0,3]_\T$.}
\end{rem}

\begin{corollary}
For $\T=\R$, system \eqref{ukl1} is positively reachable on some $[t_0,t_1]$ iff \eqref{ukl1} is positively reachable on any interval $[\tau_0,\tau_1]$.
\end{corollary}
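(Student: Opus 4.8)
The plan is to deduce the corollary directly from Proposition~\ref{p5}, whose decisive feature is that, for $\T=\R$, the criterion for positive reachability on an interval---namely that $A$ be diagonal and that $B$ contain an $n\times n$ monomial submatrix---makes no reference whatsoever to the endpoints of the interval. Thus positive reachability becomes a property of the pair $(A,B)$ alone, and the equivalence of the two formulations should fall out at once.

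The implication from ``reachable on every interval'' to ``reachable on some interval'' is immediate: I would simply pick any single interval $[t_0,t_1]$ with $t_0<t_1$, which certainly exists in $\R$, and invoke the hypothesis. For the converse, suppose the system is positively reachable on some fixed $[t_0,t_1]$. Applying the ``$\Rightarrow$'' direction of Proposition~\ref{p5} yields that $A$ is diagonal and that $B$ contains an $n\times n$ monomial submatrix. Since these two conditions involve only the matrices $A$ and $B$, they continue to hold verbatim no matter which interval we consider. Feeding them into the ``$\Leftarrow$'' direction of Proposition~\ref{p5} then produces positive reachability on an arbitrary $[\tau_0,\tau_1]$ with $\tau_0<\tau_1$, which is exactly what is claimed.

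I do not anticipate any genuine obstacle here; the whole content is the observation that Proposition~\ref{p5} supplies an endpoint-free characterization, so that the property cannot distinguish one interval from another. The only point deserving a moment of care is that the ``$\Leftarrow$'' direction of Proposition~\ref{p5} really does apply to every interval and not just to the one fixed at the outset. This is, however, explicitly recorded in the proof of that proposition: the monomial submatrix $\tilde{B}$ makes $\tilde{B}\tilde{B}^T$ diagonal with positive diagonal entries, whence $W_{\tau_0}^{\tau_1}(M)$ is monomial on an arbitrary $[\tau_0,\tau_1]$, and positive reachability follows by Corollary~\ref{c5}.
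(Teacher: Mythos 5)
Your proposal is correct and is exactly the argument the paper intends: the corollary is stated without proof precisely because the characterization in Proposition~\ref{p5} ($A$ diagonal, $B$ containing an $n\times n$ monomial submatrix) makes no reference to the interval endpoints, so reachability on one interval transfers to all intervals by applying the two directions of that proposition in succession. Your added check that the ``$\Leftarrow$'' direction applies uniformly to every interval is a sensible precaution and consistent with the paper's proof of Proposition~\ref{p5}.
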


\begin{corollary}
For $\T=\R$, system \eqref{ukl1} is positively reachable on some $[t_0,t_1]$ iff \eqref{ukl1} is positively reachable. \end{corollary}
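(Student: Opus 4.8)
The plan is to treat the two implications separately, since one of them is immediate and the other carries all the content. For the ``only if'' direction, suppose $\Sigma$ is positively reachable on some $[t_0,t_1]$, i.e. $\mathcal{R}_+^{[t_0,t_1]}=\R^n_+$. Because $\mathcal{R}_+^{[t_0,t_1]}\subseteq\mathcal{R}_+\subseteq\R^n_+$, where the last inclusion uses that $\Sigma$ is positive, we obtain $\mathcal{R}_+=\R^n_+$, so $\Sigma$ is positively reachable. This is nothing more than the implication chain already recorded (positive reachability on an interval implies positive reachability for the initial time implies positive reachability).

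The substance is the converse: positive reachability implies positive reachability on some interval. Here I would exploit two structural features special to $\T=\R$. First, for $\T=\R$ one has $\sigma(\tau)=\tau$ and $e_A(t_1,\tau)=e^{A(t_1-\tau)}$, so the change of variables $s=\tau-t_0$ in the variation-of-constants formula shows that $\mathcal{R}_+^{[t_0,t_1]}$ depends only on the length $L=t_1-t_0$; write it $\mathcal{R}_+(L)$. Second, the inclusion proposition (for $\tau_0<t_0<t_1$ one has $\mathcal{R}_+^{[t_0,t_1]}\subseteq\mathcal{R}_+^{[\tau_0,t_1]}$), combined with this translation invariance, yields monotonicity: $\mathcal{R}_+(L_1)\subseteq\mathcal{R}_+(L_2)$ whenever $L_1\le L_2$. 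Consequently $\mathcal{R}_+=\bigcup_{L>0}\mathcal{R}_+(L)$ is an increasing (nested) union. Finally, each $\mathcal{R}_+(L)$ is a convex cone, being the image of the convex cone of nonnegative controls under the linear map $u\mapsto x(t_1,t_0,0,u)$.

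With these facts the argument is short. Assume $\mathcal{R}_+=\R^n_+$. For each standard basis vector $e_i$ we have $e_i\in\mathcal{R}_+$, hence $e_i\in\mathcal{R}_+(L_i)$ for some $L_i>0$. Put $L=\max\{L_1,\ldots,L_n\}$; by monotonicity $e_1,\ldots,e_n\in\mathcal{R}_+(L)$. Since $\mathcal{R}_+(L)$ is a convex cone containing every $e_i$, it contains all nonnegative combinations of the $e_i$, that is $\R^n_+\subseteq\mathcal{R}_+(L)$; together with $\mathcal{R}_+(L)\subseteq\R^n_+$ (positivity) this gives $\mathcal{R}_+(L)=\R^n_+$. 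Hence $\Sigma$ is positively reachable on any interval of length $L$, in particular on $[t_0,t_0+L]$.

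The only delicate point is the passage from the data ``each $e_i$ is reachable in some finite time'' to a single finite time that works for all of $\R^n_+$ simultaneously, and this is exactly where the three ingredients combine: the union is nested, so the finitely many lengths $L_i$ admit a common upper bound $L$, and the member $\mathcal{R}_+(L)$ is a convex cone, so capturing the finite generating set $\{e_1,\ldots,e_n\}$ already captures the whole cone $\R^n_+$. I would also note that translation invariance and monotonicity are precisely the homogeneity features of $\T=\R$ that drive the argument; on a nonhomogeneous time scale $\mathcal{R}_+^{t_0}$ genuinely depends on $t_0$ and the reduction to a single interval need not survive.
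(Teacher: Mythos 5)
Your proof is correct, but it takes a genuinely different route from the paper's. The paper gives no separate argument for this corollary: it is meant to fall out of Proposition~\ref{p5}, whose algebraic characterization of positive reachability on an interval ($A$ diagonal and $B$ containing an $n\times n$ monomial submatrix) makes no reference to $t_0,t_1$, so that reachability on one interval, on every interval, and in the global sense all coincide. Note, however, that extracting the hard direction (positively reachable $\Rightarrow$ positively reachable on some interval) from Proposition~\ref{p5} is not purely formal: one must rerun the necessity arguments of Theorem~\ref{th21} and Proposition~\ref{p5} with a possibly different interval for each basis vector $e_i$, and translation invariance enters quietly there. Your argument makes exactly this step explicit while bypassing Proposition~\ref{p5} altogether: translation invariance of $\mathcal{R}_+^{[t_0,t_1]}$ for $\T=\R$, the padding-by-zero inclusion, and the convex-cone structure of the reachable set reduce the problem to the finite generation of $\R^n_+$ by $e_1,\ldots,e_n$ inside a nested union of cones. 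What your approach buys: it is elementary and self-contained, and it works verbatim on any homogeneous time scale (e.g.\ $\T=\mu\Z$), since only translation invariance, monotonicity and the cone property are used; the diagonality phenomenon special to $\T=\R$ plays no role. What the paper's route buys: the corollary comes packaged with an explicit, checkable algebraic test, and this corollary and the preceding one are dispatched simultaneously. One small remark: your appeal to positivity of $\Sigma$ for the inclusion $\mathcal{R}_+(L)\subseteq\R^n_+$ can be avoided in the converse direction, since $\mathcal{R}_+(L)\subseteq\mathcal{R}_+=\R^n_+$ follows directly from the hypothesis; positivity (the standing assumption of Section~\ref{sec4}) is genuinely needed only for the easy direction, where $\mathcal{R}_+\subseteq\R^n_+$ must be supplied.
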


For discrete homogeneous time scales the conditions for positive reachability are much less restrictive.

\begin{proposition} \label{p22}
Let $\T=\mu \mathbb{Z}$ for a constant $\mu>0$. Let $t_0\in\T$ and $t_1=t_0+k\mu$ for some $k\in \mathbb{N}$. System \eqref{ukl1} is positively reachable on $[t_0,t_1]_\T$ iff the matrix
  $[B,(I+\mu A)B,\ldots,(I+\mu A)^{k-1}B]$
 contains a monomial submatrix.
\end{proposition}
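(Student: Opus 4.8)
The plan is to reduce everything to the monomial-Gram-matrix criterion of Theorem~\ref{th21} and then to make that criterion explicit on $\T=\mu\Z$. First I would record that on this time scale $e_A(t_1,\sigma(\tau))=(I+\mu A)^{(t_1-\sigma(\tau))/\mu}$, and that the points of $[t_0,t_1)_\T$ are exactly $\tau_j=t_0+j\mu$ for $j=0,\ldots,k-1$, with $\sigma(\tau_j)=t_0+(j+1)\mu$, so that $e_A(t_1,\sigma(\tau_j))b_l=(I+\mu A)^{k-1-j}b_l$. Writing $\Phi:=I+\mu A$, the vectors $\Phi^{k-1-j}b_l$ obtained this way run through precisely the columns of $[B,\Phi B,\ldots,\Phi^{k-1}B]$. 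Since the delta integral over a subset of a discrete time scale is a finite sum, for any choice of $M$ and $\mathcal{S}_M$ the Gram matrix takes the form
\[ W=\mu\sum_{v\in V}vv^T, \]
where $V$ is the corresponding finite family of columns of $[B,\Phi B,\ldots,\Phi^{k-1}B]$; conversely every finite subfamily $V$ of these columns is realizable by some $M,\mathcal{S}_M$ (group the chosen columns by their $B$-index $l$ and read off the single time points $\tau_{k-1-p}$ they come from). Because $\Sigma$ is positive, $\Phi\geq 0$ and $B\geq 0$, so every such $v$ is a nonnegative vector.

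The core of the argument is then a purely combinatorial statement: a matrix of the form $\mu\sum_{v\in V}vv^T$ with all $v\geq 0$ is monomial if and only if $V$ contains, for each coordinate $i\in\{1,\ldots,n\}$, an $i$-monomial vector. I would prove this as follows. The diagonal entry $W_{ii}=\mu\sum_{v\in V}v_i^2$ vanishes iff $v_i=0$ for all $v\in V$, in which case the whole $i$th row and column of $W$ vanish; since a monomial matrix has a positive entry in each row, monomiality forces $W_{ii}>0$ for every $i$, and a monomial row containing a positive diagonal entry has all its off-diagonal entries zero, so $W$ is diagonal. Conversely, $W_{ij}=\mu\sum_{v\in V}v_iv_j$ is a sum of nonnegative terms, so $W_{ij}=0$ for $i\neq j$ forces $v_iv_j=0$ for every $v\in V$; hence each $v\in V$ has at most one nonzero component, i.e.\ is monomial or zero. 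Combined with $W_{ii}>0$ for all $i$, this is exactly the condition that $V$ supplies an $i$-monomial vector for each $i$.

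Putting the pieces together gives both implications. For the direction asserting that positive reachability implies the submatrix condition, Theorem~\ref{th21} yields a monomial $W=\mu\sum_{v\in V}vv^T$; by the core statement $V$ contains an $i$-monomial column of $[B,\Phi B,\ldots,\Phi^{k-1}B]$ for each $i$, and these $n$ columns together with all $n$ rows form an $n\times n$ monomial submatrix. For the converse, a monomial submatrix provides $n$ monomial columns $\Phi^{p_i}b_{l_i}$, one $i$-monomial for each $i$; taking $V$ to be this set realizes a diagonal Gram matrix with positive diagonal entries (hence monomial), and Theorem~\ref{th21} delivers positive reachability.

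The main obstacle I anticipate is the bookkeeping that turns the abstract freedom in Theorem~\ref{th21} (the choice of $M$ and of the integration sets $S_k$) into free selection of an arbitrary finite subfamily of columns of $[B,\Phi B,\ldots,\Phi^{k-1}B]$, together with the observation -- crucial and special to the discrete setting -- that on $\mu\Z$ one may isolate individual time points, so there is no interference between the different powers $\Phi^p b_l$. Once that correspondence is set up cleanly, the monomiality analysis is the nonnegativity-plus-symmetry argument above and is routine.
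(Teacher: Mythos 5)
Your proof is correct, and its necessity half is essentially the paper's own argument: the paper likewise invokes Theorem~\ref{th21}, rewrites the delta integrals over the discrete sets $S_k$ as finite sums $\mu\sum vv^T$ with $v=(I+\mu A)^p b_l$, and extracts from the monomial Gram matrix an $i$-monomial column of $[B,(I+\mu A)B,\ldots,(I+\mu A)^{k-1}B]$ for each $i$; your combinatorial lemma just makes explicit the nonnegativity-forbids-cancellation reasoning that the paper compresses into ``this implies.'' Where you genuinely diverge is the sufficiency half. The paper does not route it through Theorem~\ref{th21} at all: it writes the solution formula $x(t_1)=\sum_{i=0}^{k-1}\sum_{j=1}^m (I+\mu A)^i b_j u_j(k-1-i)$ and, given an $s$-monomial column $(I+\mu A)^i b_j=\gamma e_s$, sets $u_j(k-1-i)=1/\gamma$ and all other control values to zero, reaching $e_s$ directly. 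That construction is more elementary and exhibits the reaching controls explicitly. Your route --- take $V$ to be the $n$ monomial columns, realize it by singleton integration sets, observe the resulting Gram matrix is diagonal with positive diagonal, and apply Theorem~\ref{th21} --- is more uniform, pushing both directions through a single criterion, at the cost of the bookkeeping correspondence between families $\mathcal{S}_M$ and subfamilies of columns, which you set up correctly.

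Two caveats. First, your intermediate lemma is false as a biconditional: if $V$ contains, besides the required $i$-monomial vectors, a nonnegative vector with two positive entries (e.g.\ $V=\{e_1,\,e_2,\,e_1+e_2\}$ in $\R^2$), then $\mu\sum_{v\in V}vv^T$ has positive off-diagonal entries and is not monomial. What your argument actually establishes, and what you actually use, is sound: the forward implication ($W$ monomial forces every $v$ to be monomial or zero, with each coordinate covered), and, for sufficiency, the direct verification for a $V$ consisting of exactly the $n$ monomial columns. So the flaw is in the statement of the lemma, not in any step of the proof of the proposition; you should restate the lemma as ``$W$ is monomial iff every $v\in V$ is monomial or zero and for each $i$ some $v\in V$ is $i$-monomial.'' Second, both you and the paper need $I+\mu A\geq 0$ and $B\geq 0$ for the no-cancellation step; you state this explicitly as positivity of $\Sigma$, which the paper leaves to the standing assumptions of the section --- a point in your favor.
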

\begin{proof}
``$\Leftarrow$'' \ Observe that $x(t_1)=\sum_{i=0}^{k-1}\sum_{j=1}^m (I+\mu A)^ib_ju_j(k-1-i)$. If $(I+\mu A)^ib_j=\gamma e_s$ for some $\gamma>0$, then setting $u_j(k-1-i)=1/\gamma$ and all other components and values at different times putting to $0$ we get $x(t_1)=e_s$. This means positive reachability on $[t_0,t_1]_\T$.\\
``$\Rightarrow$''\ By Theorem~\ref{th21} positive reachability implies existence of a set $M$ and subsets $S_k$ of $[t_0,t_1]$ for $k\in M$ such that the matrix
\[ W=\sum_{k\in M} \int_{S_k} e_A(t_1,\sigma(\tau))b_kb_k^Te_A(t_1,\sigma(\tau))^T \Delta\tau \]
is monomial. Moreover
\begin{align*}
 \int_{S_k} e_A(t_1,\sigma(\tau))&b_kb_k^Te_A(t_1,\sigma(\tau))^T \Delta\tau =\\
&\sum_{t\in {S}_k} (I+\mu A)^{(t_1-t-\mu)/\mu}b_kb_k^T((I+\mu A)^{(t_1-t-\mu)/\mu})^T\mu.
\end{align*}
This implies that for every $i=1,\ldots,n$ there are $k\in M$, $t\in S_k$ and $0\leq j\leq n$ such that the $j$th column of $(I+\mu A)^{(t_1-t-\mu)/\mu}b_kb_k^T((I+\mu A)^{(t_1-t-\mu)/\mu})^T$ is $i$-monomial. This means that the column $(I+\mu A)^{(t_1-t-\mu)/\mu}b_k$ is $i$-monomial. But this column is one of the columns of the matrix $[B,(I+\mu A)B,\ldots,(I+\mu A)^{k-1}B]$.
\end{proof}

If $k>n$ then it is enough to consider powers of $I+\mu A$ only up to $n-1$.

\begin{proposition} \label{p23}
Let $\T=\mu \mathbb{Z}$, $k\geq n$, $t_0\in\T$ and $t_1=t_0+k\mu$. System \eqref{ukl1} is positively reachable on $[t_0,t_1]_\T$ iff the matrix
  $[B,(I+\mu A)B,\ldots,(I+\mu A)^{n-1}B]$
 contains a monomial submatrix.
\end{proposition}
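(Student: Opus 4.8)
The plan is to deduce Proposition~\ref{p23} from Proposition~\ref{p22} by showing that, once $k\geq n$, the matrices $R_k:=[B,FB,\dots,F^{k-1}B]$ and $R_n:=[B,FB,\dots,F^{n-1}B]$ (where I abbreviate $F:=I+\mu A$, and note $F\geq 0$, $B\geq 0$ since the positive systems under study satisfy $A+I/\mu\geq 0$) contain a monomial submatrix under exactly the same circumstances. First I would record a convenient reformulation: a nonnegative matrix contains an $n\times n$ monomial submatrix precisely when, for every $i\in\{1,\dots,n\}$, at least one of its columns is $i$-monomial (a positive multiple of $e_i$); collecting one such column for each $i$ produces the required submatrix, and conversely every column of a monomial submatrix is monomial while the rows force the $n$ directions to be distinct. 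Since $k\geq n$, the columns of $R_n$ are among those of $R_k$, so one implication is immediate: if $R_n$ has a monomial submatrix then so does $R_k$, and positive reachability on $[t_0,t_1]_\T$ follows from Proposition~\ref{p22}.

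For the converse I would reduce everything to the following statement about a single column $b=b_l$ of $B$. \textbf{Key Lemma:} if $F\geq 0$, $b\in\R^n_+$ and $F^{\,p}b$ is $i$-monomial for some $p\geq 0$, then $F^{\,q}b$ is $i$-monomial for some $0\leq q\leq n-1$. Granting it, the converse is finished: if \eqref{ukl1} is positively reachable on $[t_0,t_1]_\T$ then by Proposition~\ref{p22} the matrix $R_k$ has a monomial submatrix, so by the reformulation every direction $e_i$ is realized by some $i$-monomial column $F^{\,p}b_l$ of $R_k$; the Key Lemma replaces it by an $i$-monomial column $F^{\,q}b_l$ with $q\leq n-1$, i.e. a column of $R_n$. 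As this holds for every $i$, $R_n$ contains a monomial submatrix.

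To prove the Key Lemma I would pass to the combinatorics of supports. Because $F\geq 0$ and the vectors $v_t:=F^{\,t}b$ are nonnegative there is no cancellation, so the support $S_t:=\mathrm{supp}(v_t)$ evolves deterministically by the monotone neighbourhood map $N(S):=\{q:\exists\,r\in S,\ F_{qr}>0\}$ of the digraph of $F$, namely $S_{t+1}=N(S_t)$. The hypothesis says $S_p=\{i\}$ for some $p$; letting $q$ be the first index with $S_q=\{i\}$, it suffices to prove $q\leq n-1$. In the special case $S_0=\{1,\dots,n\}$ this is easy: then $S_1\subseteq S_0$ and monotonicity of $N$ gives a nested chain $S_0\supseteq S_1\supseteq\cdots$, which must strictly decrease at each step before stabilising (equal consecutive supports stay equal forever), so a singleton is reached in at most $n-1$ steps. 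For an arbitrary starting support the sequence need not be nested, and here lies the real work: I would show that the nonnegative vectors $v_0,\dots,v_q$ are linearly independent, whence $q+1\leq n$. The mechanism is that admitting a coordinate into $S_{t+1}$ requires a source already present in $S_t$, and this source constraint, together with $q$ being the \emph{first} time the support equals $\{i\}$, forces the supports $S_0,\dots,S_q$ into a triangular (echelon) pattern with distinct leading coordinates, which yields the independence.

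The main obstacle is exactly this last step: upgrading the clean nested-support argument to arbitrary initial supports, i.e. proving that the orbit of a nonnegative vector hits a prescribed coordinate direction for the first time within $n-1$ steps. I expect the cleanest route is the linear-independence/echelon argument sketched above (equivalently, a minimal-polynomial count showing the first monomial appears at a power at most $d-1$, where $d\leq n$ is the degree of the minimal polynomial of $b$ relative to $F$); should a fully self-contained argument prove delicate, the statement coincides with the classical $n$-step bound for positive reachability of the discrete-time positive system $x(k+1)=Fx(k)+\mu Bu(k)$, and one may invoke that result. Everything else — the reformulation, the trivial inclusion, and the reduction of the converse to the Key Lemma — is routine once Proposition~\ref{p22} is available.
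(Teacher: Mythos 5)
Your reduction skeleton is correct: the reformulation of ``contains an $n\times n$ monomial submatrix'' as ``for every $i$ some column is $i$-monomial'', the easy direction (the columns of $R_n$ are among those of $R_k$ when $k\geq n$), and the passage, via Proposition~\ref{p22}, from positive reachability to the existence of an $i$-monomial column $F^pb_l$ of $R_k$ for each $i$, are all fine. But note what the paper itself does with this proposition: it gives no proof at all. It observes that the system can be rewritten in shift form $x(t+1)=(I+A)x(t)+Bu(t)$, cites \cite{FMT} for $\mu=1$, and declares the case $\mu\neq 1$ ``very similar''. Consequently, the entire mathematical content of Proposition~\ref{p23} beyond Proposition~\ref{p22} is exactly your Key Lemma --- and that is the one step you do not prove.

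The gap there is genuine, not a routine verification. Your nested-support argument covers only the case where the supports form a decreasing chain; in general they do not (coordinates can leave the support and re-enter it later, e.g.\ when $i$ lies on a cycle of the digraph of $F$, so that $i$ belongs to several earlier supports accompanied by extra coordinates). The cheap consequences of minimality of $q$ --- that $S_0,\dots,S_q$ are pairwise distinct, and even that no earlier support is contained in a later one (if $S_s\subseteq S_t$ with $s<t$, apply your map $N$ a further $q-t$ times to get $\emptyset\neq S_{s+q-t}\subseteq\{i\}$, contradicting minimality) --- bound $q$ only by roughly $2^n$, since sequences of distinct nonempty subsets with no earlier one contained in a later one can have length $2^n-1$. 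Your echelon/linear-independence claim is equivalent to the assertion that each $S_t$, $t<q$, contains a coordinate that never recurs in $S_{t+1},\dots,S_q$; this assertion is precisely the combinatorial core of the classical discrete-time theorem, and saying that the ``source constraint'' forces a ``triangular pattern'' does not establish it. If you instead take your announced fallback and invoke the classical result, the proof does close --- positive reachability on $[t_0,t_0+k\mu]_\T$ is literally $k$-step positive reachability of the shift-form system, so the cited theorem applies directly and even renders the Key Lemma unnecessary --- but then your argument is no longer an alternative route: it is the paper's own proof (a citation to \cite{FMT}), merely reorganized through Proposition~\ref{p22}, whose one real merit is that the scaling $\mu\neq 1$ is then handled automatically.
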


For $\mu=1$ this was shown in \cite{FMT}. Then \eqref{ukl1} may be rewritten in a more familiar form $x(t+1)=(I+A)x(t)+Bu(t)$. The proof for $\mu\neq 1$ is very similar.

Proposition~\ref{p22} may be extended to nonhomogeneous discrete time scales.

\begin{proposition} \label{p24}
Assume that $\mu(t)>0$ for all $t\in\T$, $t_0\in\T$ and $t_1=\sigma^k(t_0)$. System \eqref{ukl1} is positively reachable on $[t_0,t_1]_\T$ iff the matrix
\begin{align*}
  [B,(I+\mu(\sigma(t_0)A)B,&(I+\mu(\sigma^2(t_0)A)(I+\mu(\sigma(t_0)A)B, \ldots,\\
  &(I+\mu(\sigma^{k-1}(t_0)A)\ldots(I+\mu(\sigma(t_0)A)B]
\end{align*} contains a monomial submatrix.
\end{proposition}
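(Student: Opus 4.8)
The plan is to reduce everything to a finite sum, just as in the proof of Proposition~\ref{p22}, since the hypothesis $\mu(t)>0$ for all $t\in\T$ forces every point to be right-scattered. Writing $t_i:=\sigma^i(t_0)$, the interval $[t_0,t_1)_\T$ is the finite set $\{t_0,\dots,t_{k-1}\}$, and the delta integral in the variation-of-constants formula of Theorem~\ref{nonh} collapses to $x(t_1)=\sum_{i=0}^{k-1}\mu(t_i)\,e_A(t_1,t_{i+1})Bu(t_i)$. Iterating the one-step identity $e_A(\sigma(t),t)=I+\mu(t)A$ together with the semigroup law of Proposition~\ref{p10} shows that each $e_A(t_1,t_{i+1})$ is the product of the matrices $I+\mu(t_j)A$ for $j=i+1,\dots,k-1$; hence the matrices $e_A(t_1,t_{i+1})B$, $i=0,\dots,k-1$, are exactly the block columns of the displayed matrix. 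So the task is to relate reachability to the existence of a monomial column among the vectors $e_A(t_1,t_{i+1})b_\ell$.

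For the implication ``$\Leftarrow$'' I would argue directly. A monomial submatrix of the displayed matrix provides, for each $s=1,\dots,n$, a column $e_A(t_1,t_{i+1})b_\ell=\gamma\, e_s$ with $\gamma>0$. Feeding in the control that is zero except for a single positive value of the $\ell$th input at the instant $t_i$ reaches a positive multiple of $e_s$; since $\mathcal{R}_+^{[t_0,t_1]}$ is a convex cone and the $n$ directions $e_1,\dots,e_n$ are all attained, we get $\mathcal{R}_+^{[t_0,t_1]}=\R^n_+$, i.e. positive reachability on $[t_0,t_1]_\T$.

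The implication ``$\Rightarrow$'' is where the work lies and where I would lean on Theorem~\ref{th21}. Positive reachability yields $M$ and $\mathcal{S}_M$ for which $W=W_{t_0}^{t_1}(M,\mathcal{S}_M)$ is monomial. Expanding each integral over $S_k$ as a finite sum gives $W=\sum_{k\in M}\sum_{t\in S_k}\mu(t)\,c_{k,t}c_{k,t}^{\,T}$ with $c_{k,t}:=e_A(t_1,\sigma(t))b_k\ge 0$. Being symmetric and monomial, $W$ is in fact a positive diagonal matrix, so all off-diagonal entries $\sum_{k,t}\mu(t)(c_{k,t})_p(c_{k,t})_q$ (for $p\neq q$) vanish; as every summand is nonnegative, each $c_{k,t}$ has at most one nonzero coordinate, and positivity of the $i$th diagonal entry forces some $c_{k,t}$ to be $i$-monomial. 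Because $\mu(t)>0$, this means $e_A(t_1,\sigma(t))b_k=e_A(t_1,t_{i+1})b_k$ is $i$-monomial for some admissible $i$ and $k$. Doing this for each $i=1,\dots,n$ produces $n$ monomial columns of the displayed matrix in distinct directions, hence the desired $n\times n$ monomial submatrix.

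\textbf{Main obstacle.} The crux is the elementary but decisive rank-one lemma in the ``$\Rightarrow$'' step: a sum of nonnegative matrices $cc^T$ that equals a positive diagonal matrix forces each $c$ to be monomial. Everything else is bookkeeping --- passing from the integral to the finite sum and checking that the extracted vectors $e_A(t_1,t_{i+1})b_k$ are genuinely columns of the displayed matrix. The one new feature compared with Proposition~\ref{p22} is that the transition matrices $I+\mu(t_j)A$ now depend on $j$, so the homogeneous powers $(I+\mu A)^i$ are replaced by ordered products; this changes the form of the columns but not the logic of either implication.
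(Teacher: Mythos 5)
Your architecture (collapse the delta integral to a finite sum, a direct control construction for ``$\Leftarrow$'', Theorem~\ref{th21} plus a rank-one lemma for ``$\Rightarrow$'') is exactly the route the paper intends --- its own proof is only the remark that one argues ``as in Proposition~\ref{p22}'' with products replacing powers. But there is a genuine gap, and it sits precisely in your sentence ``hence the matrices $e_A(t_1,t_{i+1})B$, $i=0,\dots,k-1$, are exactly the block columns of the displayed matrix.'' They are not. Writing $s_j:=\sigma^j(t_0)$, the semigroup law gives
\[ e_A(t_1,s_{i+1})=(I+\mu(s_{k-1})A)(I+\mu(s_{k-2})A)\cdots(I+\mu(s_{i+1})A), \]
a product over the \emph{last} $k-1-i$ graininesses, accumulating backwards from the final time, whereas the $j$th block of the displayed matrix is $(I+\mu(s_j)A)\cdots(I+\mu(s_1)A)B$, a product over the \emph{first} $j$ graininesses. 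When $\mu$ is constant both families reduce to the powers $(I+\mu A)^jB$, which is why the identification is harmless in Proposition~\ref{p22}; for nonconstant $\mu$ they differ, and the mismatch is fatal. Indeed, take $\T=\{0,1,2,4,5,6,\dots\}$, $t_0=0$, $k=3$, $t_1=4$,
\[ A=\begin{pmatrix}-\tfrac12&\tfrac12\\ \tfrac12&-\tfrac12\end{pmatrix},\qquad B=\begin{pmatrix}1\\0\end{pmatrix}. \]
Here $\bar{\mu}=2$ and $A+I/\bar{\mu}\geq 0$, so the system is positive. The actual input-to-state coefficients are $B=\begin{pmatrix}1\\0\end{pmatrix}$, $(I+\mu(2)A)B=\begin{pmatrix}0\\1\end{pmatrix}$ and $(I+\mu(2)A)(I+\mu(1)A)B=\begin{pmatrix}1/2\\1/2\end{pmatrix}$, whose nonnegative combinations fill all of $\R^2_+$, so the system \emph{is} positively reachable on $[0,4]_\T$. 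Yet the displayed matrix equals
\[ \begin{pmatrix}1&\tfrac12&\tfrac12\\ 0&\tfrac12&\tfrac12\end{pmatrix}, \]
which has no $2$-monomial column and hence no $2\times 2$ monomial submatrix. So the proposition with the products ordered as displayed is false, and no argument can close your gap; what your computation actually establishes is the corrected criterion with the matrix $[B,\ (I+\mu(s_{k-1})A)B,\ (I+\mu(s_{k-1})A)(I+\mu(s_{k-2})A)B,\ \dots,\ (I+\mu(s_{k-1})A)\cdots(I+\mu(s_1)A)B]$. To be fair, the defect is inherited: the paper's statement, and its one-line proof by analogy, suffer from the same ordering problem, which is invisible in the homogeneous case it is extrapolated from.

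A smaller repair is needed in your ``$\Rightarrow$'' step: ``symmetric and monomial'' does not by itself imply diagonal --- a transposition permutation matrix is symmetric and monomial. What you must use is the structure $W=\sum\mu(t)\,c_{k,t}c_{k,t}^{T}$ with $c_{k,t}\geq 0$: if $W_{pq}>0$ for some $p\neq q$, then some summand has $(c_{k,t})_p(c_{k,t})_q>0$, whence also $W_{pp}\geq\mu(t)(c_{k,t})_p^2>0$, so row $p$ of $W$ has two positive entries, contradicting monomiality. With that substitution your rank-one lemma and the remaining bookkeeping are sound, and they prove the proposition with the re-ordered matrix above.
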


The proof is similar to the proof of Proposition~\ref{p22}, but we have to take into account that the exponential matrix is no longer a power of $I+\mu A$ for a constant $\mu$ but rather a product of such terms with possibly different values of $\mu$. This criterion may be used for systems on $\T=q^{\N}$.

\begin{rem}
\textup{Proposition~\ref{p23} cannot be extended to discrete nonhomogeneous time scales. Let us consider the following example: $\T=\{ 0,1,2,4\}$, $n=2$,
\[ A=\begin{pmatrix} -\frac{1}{2} & 0 \\
                           1      & -\frac{1}{2} \end{pmatrix},\
                           B=\begin{pmatrix} 1 \\
                                             0 \end{pmatrix}. \]
Let $t_0=0$ and $t_1=4$, so $k=3$. The matrix
\[  [B,(I+\mu(1)A)B,(I+\mu(2)A)(I+\mu(1)A)B]=\begin{pmatrix} 1 & \frac{1}{2} & 0 \\
                                                             0 &    1        & 3 \end{pmatrix} \]
evidently contains a monomial $2\times 2$ submatrix. But this is not true for the matrix $[B,(I+\mu(1)A)B]$. Thus to reach monomial vectors we may need more than $n$ jumps.}
\end{rem}

\end{document}